\newcommand{\eChar}{\begin{enumerate}[(i)]}
\newcommand{\eCharR}{\begin{enumerate}[(a)]}
\newcommand{\eBr}{\begin{enumerate}[(1)]}
\newcommand\ceil[1]{\left\lceil #1\right\rceil}
\title
{
Connectivity versus Lin-Lu-Yau curvature
}
\author[1]{Kaizhe Chen\thanks{Email: ckz22000259@mail.ustc.edu.cn}}
\author[2]{Shiping Liu\thanks{Email: spliu@ustc.edu.cn}}
\author[3]{Zhe You\thanks{Email: y30231280@mail.ecust.edu.cn}}
\affil[1]{School of Gifted Young, University of Science and Technology of China}
\affil[2]{School of Mathematical Sciences, University of Science and Technology of China}
\affil[3]{School of Mathematics, East China University of Science and Technology}
\date{}
\theoremstyle{plain}
\newtheorem{lemma}{Lemma}[section]
\newtheorem{theorem}[lemma]{Theorem}
\newtheorem{corollary}[lemma]{Corollary}
\theoremstyle{definition}
\newtheorem{claim}{Claim}
\newtheorem{definition}[lemma]{Definition}
\newtheorem{example}[lemma]{Example}
\numberwithin{equation}{section}
\numberwithin{subsection}{section}
\numberwithin{theorem}{section}
\numberwithin{lemma}{section}
\numberwithin{proposition}{section}
\numberwithin{example}{section}
\numberwithin{no}{section}
\numberwithin{rem}{section}
\numberwithin{defn}{section}
\numberwithin{corollary}{section}
\begin{document}

\maketitle

\begin{abstract}
We explore the interaction between connectivity and Lin-Lu-Yau curvature of graphs systematically. The intuition is that connected graphs with large Lin-Lu-Yau curvature also have large connectivity, and vice versa. We prove that the connectivity of a connected graph is lower bounded by the product of its minimum degree and its Lin-Lu-Yau curvature. On the other hand, if the connectivity of a graph $G$ on $n$ vertices is at least $\frac{n-1}{2}$, then $G$ has positive Lin-Lu-Yau curvature. Moreover, the bound $\frac{n-1}{2}$ here is optimal. Furthermore, we prove that the edge-connectivity is equal to the minimum vertex degree for any connected graph with positive Lin-Lu-Yau curvature. As applications, we estimate or determine the connectivity and edge-connectivity of an amply regular graph with parameters $(d,\alpha,\beta)$ such that $1\neq \beta\geq \alpha$. 
\end{abstract}

\section{Introduction}

Connectivity is one of the fundamental concepts of graph theory \cite{BM08, Die17}. The {\it connectivity} $k(G)$ of a non-complete graph $G$ is defined as the minimum number of vertices that require to be removed to disconnect the graph. For consistency, the connectivity of a complete graph $K_n$ with $n$ vertices is defined to be $n-1$. For example, the connectivity of the two graphs depicted in Figure \ref{fig:1} is equal to $1$ and $2$, respectively. 
\begin{figure}[h!]
\centering
\includegraphics[width=.8\textwidth]{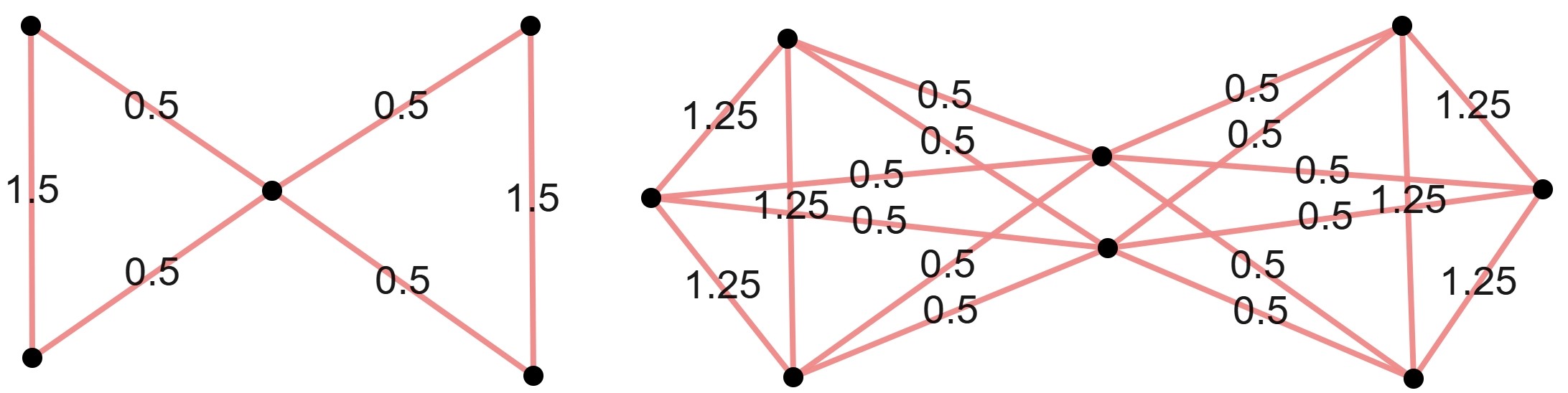}
  \caption{Two graphs labeled with the Lin-Lu-Yau curvature of each edge.}
   \label{fig:1}
\end{figure}

The {\it edge connectivity} $k'(G)$ of a graph $G$ with at least two vertices is defined as the minimum number of edges that required to be removed to disconnect the graph. For consistency, the edge connectivity of a graph with only a single vertex is defined to be $0$.

Let us denote by $\delta(G)$ the minimum vertex degree of a graph $G$. A classical estimate of connectivity of a graph $G$ due to Whitney \cite{Whi32} reads as
\begin{equation}\label{whi}
    k(G)\leq k'(G)\leq \delta(G).
\end{equation}

Both estimates in \eqref{whi} are sharp. For example, the identity  $k(G)=k'(G)=\delta(G)$ holds for any finite connected edge-transitive graph \cite{Mad71, Wat70}, for any finite distance-regular graph \cite{BrHa05, BrKo09, BrMe85} and for any finite $(0,2)$-graph \cite{BrMu97}. For a finite connected vertex-transitive graph $G$, Mader \cite[Satz 6]{Mad71} proved that $k'(G)=\delta(G)$. In general, The three quantities $k(G), k'(G),\delta(G)$ can be arbitrarily far away from each other. Indeed, for any integers $0<\ell\leq m\leq d$, there exists a  graph $G$ with $k(G)=\ell, k'(G)=m$ and $\delta(G)=d$. Even for a finite connected vertex-transitive graph, its connectivity can be strictly smaller than its vertex degree. Mader \cite{Mad71} and Watkins \cite{Wat70} proved independently that $k(G)\geq \frac{2}{3}\left(\delta(G)+1\right)$ for any finite connected vertex-transitive graph $G$. The behavior of connectivity of infinite graphs can be quite different from that of finite graphs \cite{BW80}.

%Algebraic structures: For finite vertex-transitive graphs with degree $\delta(G)$: $k(G)\geq \frac{2}{3}\left(\delta(G)+1\right)$ \cite{Mad71,Wat70}; $k'(G)=\delta(G)$ \cite[Satz 6]{Mad71}, \cite[Problem 14 of Chapter 12]{Lov79}, see also \cite[Theorem 9.14]{BM08}. For finite edge-transitive graphs, we have $k(G)=k'(G)=\delta(G)$ \cite{Mad71, Wat70}. For connectivity of infinite graphs, see, e.g., \cite{BW80}.

Our first aim is to estimate or determine the connectivity of a connected graph via its Lin-Lu-Yau curvature. In contrast to the above mentioned results with global algebraic symmetric restrictions, the Lin-Lu-Yau curvature detects local structural information of a graph. As a result, our estimates are not only restricted to finite graphs, but also for infinite locally finite graphs.

The curvature notion $\kappa_{LLY}(x,y)$, introduced by Lin, Lu and Yau \cite{LLY11} based on Ollivier's coarse Ricci curvature notion \cite{O09}, is defined on any two distinct vertices $x,y$ of a graph $G$. We call $\kappa_{LLY}(x,y)$ the Lin-Lu-Yau curvature of an edge $\{x,y\}$ whenever $x,y$ are adjacent. The curvature $\kappa_{LLY}(x,y)$ is defined via comparing the two local structures around $x$ and $y$. We have $\kappa_{LLY}(x,y)>0$ if and only if the "distance" between the two local structures around $x$ and $y$ is smaller than the distance between $x$ and $y$. The precise definition is formulated via the Wasserstein distance between probability measures, see Subsection \ref{subsection:WassersteinLLY}. In Figure \ref{fig:1}, the number on each edge is the Lin-Lu-Yau curvature of that edge, which is calculated by the graph curvature calculator \cite{CKLLS22}.

For two vertices $x,y$ in $G$, we denote by $d(x,y)$ the distance between $x$ and $y$. For any positive integer $i$, let us denote the lower Lin-Lu-Yau curvature bound at scale $i$ by 
\[\kappa_{LLY}^{(i)}(G):=\inf_{\substack{x,y\in V\\d(x,y)=i}}\kappa_{LLY}(x,y).\]
When $i=1$, we simply write $\kappa_{LLY}(G)$ instead of $\kappa_{LLY}^{(1)}(G)$. 

Our first result tells that positive Lin-Lu-Yau curvature implies large connectivity.
\begin{theorem}\label{connectivty bound}
Let $G$ be a non-complete connected graph with minimum degree $\delta(G)$ and connectivity $k(G)$. Then
    \begin{align}\notag
        k(G)\geq \delta(G)\kappa_{LLY}^{(2)}(G).
    \end{align}
\end{theorem}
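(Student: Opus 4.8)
The plan is this: given a minimum vertex cut $S$ of $G$ (so $|S|=k(G)$), I produce a pair of vertices $x,y$ with $d(x,y)=2$ for which $\kappa_{LLY}(x,y)\le k(G)/\delta(G)$ can be proved directly; then $\kappa_{LLY}^{(2)}(G)\le\kappa_{LLY}(x,y)\le k(G)/\delta(G)$ gives the theorem. (If $\kappa_{LLY}^{(2)}(G)\le 0$ the inequality is trivial, since $k(G)\ge 1$, so one may assume it positive.)

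\emph{Choosing the pair.} A minimum cut is inclusion-minimal, so for each $s\in S$ the graph $G-(S\setminus\{s\})$ is connected; hence every $s\in S$ has a neighbour in every component of $G-S$. Fix $s\in S$, pick two distinct components $A,B$ of $G-S$, and choose $x\in A\cap N(s)$ and $y\in B\cap N(s)$. Then $x\not\sim y$ (no edge runs between distinct components of $G-S$) while $x$--$s$--$y$ is a path, so $d(x,y)=2$; moreover $N(x)\subseteq A\cup S$ and $N(y)\subseteq B\cup S$. Write $r_x=|N(x)\cap S|$ and $r_y=|N(y)\cap S|$, both of which are at most $|S|=k(G)$.

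\emph{The test function.} The heart of the argument is feeding Kantorovich duality the right $1$-Lipschitz function. I take $f\colon V\to\{0,1,2\}$ with $f\equiv 0$ on $B$, $f\equiv 1$ on $S$, and $f\equiv 2$ elsewhere (so $f(x)=2$, $f(y)=0$); it is $1$-Lipschitz precisely because $B$ is adjacent only to vertices of $S$, so the only size-$1$ jumps occur across $B$--$S$ and $S$--(elsewhere) edges. For idleness $\alpha$ let $\mu_x^\alpha$ be the lazy walk measure ($\alpha$ at $x$, $(1-\alpha)/\deg(x)$ at each neighbour of $x$). Using $f\equiv 2$ on $N(x)\cap A$, $f\equiv 1$ on $N(x)\cap S$, and dually for $y$, a direct evaluation gives $\sum_z f(z)\mu_x^\alpha(z)=2-(1-\alpha)r_x/\deg(x)$ and $\sum_z f(z)\mu_y^\alpha(z)=(1-\alpha)r_y/\deg(y)$, whence
\[
W_1(\mu_x^\alpha,\mu_y^\alpha)\ \ge\ \sum_z f(z)\bigl(\mu_x^\alpha(z)-\mu_y^\alpha(z)\bigr)\ =\ 2-(1-\alpha)\left(\frac{r_x}{\deg(x)}+\frac{r_y}{\deg(y)}\right).
\]
Since $d(x,y)=2$, the Ollivier curvature is $\kappa_\alpha(x,y)=1-\tfrac12 W_1(\mu_x^\alpha,\mu_y^\alpha)\le\tfrac12(1-\alpha)\bigl(r_x/\deg(x)+r_y/\deg(y)\bigr)$; dividing by $1-\alpha$ and letting $\alpha\to1^-$ yields $\kappa_{LLY}(x,y)\le\tfrac12\bigl(r_x/\deg(x)+r_y/\deg(y)\bigr)\le k(G)/\delta(G)$, using $r_x,r_y\le k(G)$ and $\deg(x),\deg(y)\ge\delta(G)$. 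This proves the theorem, and the argument is unchanged for infinite locally finite $G$ because the measures are finitely supported.

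\emph{Expected obstacle.} The only genuine choice is the test function. The naive candidate $f(z)=\min(2,d(z,y))$ only gives $\kappa_{LLY}(x,y)\le\tfrac12\bigl(1+|N(x)\cap N(y)|/\deg(x)\bigr)$ --- which is sharp for $C_5$ --- and this merely yields $k(G)\ge\delta(G)\bigl(2\kappa_{LLY}^{(2)}(G)-1\bigr)$, too weak once $\kappa_{LLY}^{(2)}(G)<1$. Measuring distance to the \emph{whole} far component $B$ instead of to the single vertex $y$, and using minimality of $S$ so that $f\equiv 1$ on \emph{all} of $S$ (not just on $N(x)\cup N(y)$), is exactly what tightens the estimate to $k(G)/\delta(G)$.
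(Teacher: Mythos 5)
Your proposal is correct and follows essentially the same route as the paper: a minimum cut $S$, a vertex $s\in S$ with neighbours $x,y$ in two different components (the paper's Lemma 3.1), and Kantorovich duality with a test function that is in effect the truncated distance to the far component, yielding $\kappa_{LLY}(x,y)\le\tfrac12\bigl(r_x/d_x+r_y/d_y\bigr)\le k(G)/\delta(G)$. The only cosmetic differences are that the paper uses $f(v)=d(v,X)$ directly (bounding the far-side values below by $2$) rather than your explicit three-level function, and closes with $k(G)\ge\max\{r_x,r_y\}\ge\tfrac12(r_x+r_y)$ instead of $r_x,r_y\le k(G)$.
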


Recall that we have $\kappa_{LLY}^{(2)}(G)\geq \kappa_{LLY}(G)$ due to the triangle inequality of the Wasserstein distance, see \cite[Lemma 2.3]{LLY11} or Lemma \ref{lemma:curv_long_scale} below. Therefore, Theorem \ref{connectivty bound} implies in particular that
\begin{equation}\label{eq:conn_kLLY}
    k(G)\geq \delta(G)\kappa_{LLY}(G).
\end{equation}
%It is proved by Bonini et al. \cite[Theorem 1.1]{BCDDFP20} that $\kappa_{LLY}(G)>1$ if and only if $G$ is complete. In particular, we have $\kappa_{LLY}(G)\leq 1$ for any non-complete graphs. Combining Theorem \ref{connectivty bound} and the estimate \eqref{whi}, we derive $\kappa_{LLY}^{(2)}(G)\leq 1$ for any non-complete graphs.

For any graph $G$ obtained by deleting a non-empty matching from a complete graph $K_n$ with $n\geq 3$ vertices, the equality holds in the above estimate \eqref{eq:conn_kLLY} and hence also in Theorem \ref{connectivty bound}. Indeed, it was proved independently in \cite[Theorem 1.3]{CS24} and  \cite[Theorem 1]{HHZ25} that $\kappa_{LLY}(G)=1$ holds for such a graph. Therefore, \eqref{eq:conn_kLLY} tells $k(G)\geq \delta(G)$ which forces the equality. 

Furthermore, for the two graphs in Figure \ref{fig:1}, the equality holds in Theorem \ref{connectivty bound}.  In fact, the two graphs in Figure \ref{fig:1} represent an infinite family of graphs for which Theorem \ref{connectivty bound} is sharp. 
\begin{example}[The join of two copies of $K_n$ with a third graph]
Recall that the join $G_1\vee G_2$ of two graphs $G_1$ and $G_2$ is obtained by starting with a disjoint union of two graphs $G_1$ and $G_2$ and adding edges joining every vertex of $G_1$ to every vertex of $G_2$. We consider the join $2K_n\vee G$ of two copies of the complete graph $K_n$ and a third graph $G$. We check that Theorem \ref{connectivty bound} is sharp for $2K_n\vee G$ if the number of vertices in $G$ is smaller than $n$. The two graphs shown in Figure \ref{fig:1} are $2K_2\vee K_1$ and $2K_3\vee 2K_1$, respectively.
\end{example}

There are many more graphs for which Theorem \ref{connectivty bound} is sharp, and even more graphs after rounding up the lower bound to an integer. 
%For graphs such that any two vertices at distance two have at most $\beta$ common neighbors, we modify the proof of Theorem \ref{connectivty bound} accordingly and derive another lower bound of $k(G)$ in Theorem \ref{thm:conn_beta}, which is sharp for, e.g., the Cartesian product graph $K_n\times K_n$. 
Via estimating the Lin-Lu-Yau curvature of certain \emph{adjacent} vertices directly, we obtain the following estimate. 
%For graphs such that every adjacent vertices have at most $\alpha$ common neighbors, we prove the following estimate of the connectivity.
\begin{theorem}\label{connectivty bound2}
    Let $G$ be a non-complete connected graph with minimum degree $\delta(G)$ and  connectivity $k(G)$. Suppose that any two adjacent vertices have at most $\alpha$ common neighbors, and any two vertices at distance $2$ have at most $\beta$ common neighbors. Then
    \begin{align}\notag
        k(G)\geq \left(2\kappa_{LLY}(G)+1\right)\delta(G)-2\alpha-\beta-2.
    \end{align}
\end{theorem}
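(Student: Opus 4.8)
The plan is to work with a minimum vertex cut. Let $S\subseteq V(G)$ be a minimum cut, so $|S|=k(G)$, let $A$ be one connected component of $G-S$, and let $B:=V(G)\setminus(A\cup S)$ be the union of the remaining components. Minimality of $S$ gives two facts I will exploit: there are no edges between $A$ and $B$, and every vertex of $S$ has at least one neighbour in $A$ and at least one in $B$ (otherwise, deleting from $S$ a vertex with no neighbour in some component would leave a cut of size $k(G)-1$). In particular there is an edge $\{a,s\}$ with $a\in A$ and $s\in S$, and the idea is to bound $\kappa_{LLY}(a,s)$ from above in terms of $k(G),\delta(G),\alpha,\beta$; since $\kappa_{LLY}(G)\le\kappa_{LLY}(a,s)$, rearranging that bound yields the theorem. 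The edge $\{a,s\}$ should be chosen with care rather than arbitrarily — one wants $a$ with few neighbours in $S$, and $s$ with few neighbours in $A$ but many in $B$ — and a short counting/averaging argument, applying the degree bound $\delta(G)$ once inside the subgraph $G[A]$ and once to the vertices of $S$, produces usable estimates for $|N(a)\cap S|$, $|N(s)\cap A|$ and $|N(s)\cap B|$.

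To estimate $\kappa_{LLY}(a,s)$ I would use $\kappa_{LLY}(a,s)=\lim_{p\uparrow 1}\frac{1}{1-p}\bigl(1-W_1(\mu_a^{p},\mu_s^{p})\bigr)$ (a limit equal to the supremum over $p\in[0,1)$, so that a bound need only be uniform in $p$) together with the Kantorovich lower bound $W_1(\mu_a^{p},\mu_s^{p})\ge\sum_{z\in V}f(z)\bigl(\mu_s^{p}(z)-\mu_a^{p}(z)\bigr)$, valid for every $1$-Lipschitz $f$. The most natural $f$ is the ``side function'' equal to $0$ on $A$, to $1$ on $S$, and to $2$ on $B$: it is $1$-Lipschitz precisely because there are no $A$–$B$ edges, and $f(s)-f(a)=1$ makes it detect the edge. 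Feeding it into the duality bound and simplifying gives
\[
  \kappa_{LLY}(a,s)\ \le\ \frac{|N(a)\cap S|}{\deg a}\;+\;\frac{|N(s)\cap A|-|N(s)\cap B|}{\deg s}.
\]
This already reflects the cut, but it ignores the local structure at the edge and so is too crude for the exact constants $2\alpha,\beta,2$. The refinement is to perturb $f$ on the ball of radius one around $\{a,s\}$: push $f$ down on the private neighbours of $a$ lying in $A$ and keep it up on the private neighbours of $s$ lying in $B$, then invoke that $\{a,s\}$ lies in at most $\alpha$ triangles and that any two vertices at distance $2$ have at most $\beta$ common neighbours to verify the perturbed $f$ is still $1$-Lipschitz while the numerator improves by the right amount. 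The factor $2$ in $2\alpha+\beta+2$ and the implicit denominator $2\delta(G)$ appearing after rearrangement come from treating the two endpoints symmetrically, i.e.\ from an estimate of the shape $2\kappa_{LLY}(a,s)\le c_a/\deg a+c_s/\deg s$ with $\deg a,\deg s\ge\delta(G)$ and $c_a+c_s$ the relevant count.

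The main obstacle is the construction and Lipschitz verification of this single test function — equivalently, of an optimal transport plan forced to cross $S$ — since it must simultaneously separate $a$ from $s$, move the $B$-side mass of $\mu_s^{p}$ as far as the graph metric allows, and survive both the triangles through $\{a,s\}$ and the $4$-cycles between $N(a)$ and $N(s)$; it is exactly in controlling these last obstructions that the hypotheses $\alpha$ and $\beta$ enter, and arranging the bookkeeping so that it lands on $2\alpha+\beta+2$ rather than something weaker is the delicate part. A secondary point is that one must keep the estimate uniform as $p\uparrow1$ and not evaluate it at $p=0$, which would only bound the plain Ollivier curvature rather than $\kappa_{LLY}$.
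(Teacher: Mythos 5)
Your proposal has the right general flavour (Kantorovich duality against a cut-adapted test function), but it stops exactly where the theorem is decided, and the single-edge strategy it outlines is structurally different from what is needed. The crude side-function bound $\kappa_{LLY}(a,s)\le \frac{|S_1(a)\cap S|}{d_a}+\frac{|S_1(s)\cap A|-|S_1(s)\cap B|}{d_s}$ is correct, but everything after it is deferred: the ``perturbed'' $f$ is never constructed, its Lipschitz property is never checked, and you yourself flag the bookkeeping that is supposed to land on $2\alpha+\beta+2$ as the delicate open part --- that bookkeeping is the entire content of the proof. Moreover the plan has concrete obstructions: with a single crossing edge the term $|S_1(s)\cap A|/d_s$ survives and can be close to $1$ (e.g.\ when every vertex of the cut $S$ has exactly one neighbour in $B$ and many in $A$), the quantity $k(G)=|S|$ never enters your curvature estimate at all, and it is not clear how $\beta$ could enter through a Lipschitz verification of ``$4$-cycles between $N(a)$ and $N(s)$''. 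A good single edge does exist a posteriori (see below), but your proposal gives no selection or averaging argument that certifies one, so as written there is a genuine gap.

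The paper's mechanism is different and is worth internalizing: fix one cut vertex $u\in S$ and take \emph{two} edges $ux$ and $uy$ with $x$ and $y$ in \emph{different} components $X,Y$ of $G-S$ (these exist by the minimality of $S$). Each curvature is bounded by duality against $d(\cdot,Z)$ with $Z=\left((S_1(x)\cap X)\setminus S_1(u)\right)\cup\{x\}$ (and symmetrically for $y$), giving
\begin{equation*}
\kappa_{LLY}(u,x)\le \frac{|S_1(u)\cap X|+|S_1(u)\cap S|+1}{d_u}+\frac{|S_1(x)\cap S|+|S_1(u)\cap S_1(x)\cap X|}{d_x}-1 .
\end{equation*}
Summing the two bounds is what produces the factor $2$ (it is two edges, not the two endpoints of one edge): the identity $|S_1(u)\cap X|+|S_1(u)\cap Y|+|S_1(u)\cap S|\le d_u$ cancels the large component terms that cripple any one-edge estimate. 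Then $k(G)$ and $\beta$ enter purely combinatorially, via inclusion--exclusion inside the cut: since $x$ and $y$ lie in different components and are both adjacent to $u$, they are at distance $2$, so $|S_1(x)\cap S_1(y)\cap S|\le\beta$ and
\begin{equation*}
k(G)=|S|\ \ge\ |S_1(u)\cap S|+|S_1(x)\cap S|+|S_1(y)\cap S|-\alpha^{+}_{ux}-\alpha^{+}_{uy}-\beta ,
\end{equation*}
with the $\alpha$-hypothesis absorbing the common-neighbour corrections for the two adjacent pairs. Combining the two displays yields $2\kappa_{LLY}(G)\le\frac{1}{\delta(G)}\left(k(G)+2\alpha+\beta+2\right)-1$, which is the theorem; in particular at least one of the two edges $ux,uy$ satisfies the single-edge inequality you were aiming for, but only as a consequence of this two-edge averaging, not of a refined test function on one edge.
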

 A $d$-regular graph $G$ is called an \emph{amply regular graph} with parameters $(d,\alpha,\beta)$ if any two adjacent vertices have $\alpha$ common neighbors and any two vertices at distance two have $\beta$ common neighbors. All distance-regular graphs are amply regular. The Hamming graph $H(p,q)$ is the Cartesian product of $p$ copies of the complete graph $K_q$, which is amply regular with parameters $(p(q-1),q-2,2)$.  By \cite[Example 1 and Theorem 3.1]{LLY11}, the Lin-Lu-Yau curvature of each edge of $H(p,q)$  equals $\frac{q}{p(q-1)}$, so Theorem \ref{connectivty bound2} is sharp for all Hamming graphs.
 
 The Lin-Lu-Yau curvature of any amply regular graph $G$ with parameters $(d,\alpha,\beta)$ such that $1\neq \beta\geq \alpha$ has the following lower bound
 \begin{equation}\label{eq:CHLZ}
     \kappa_{LLY}(G)\geq \frac{2+\ceil{\frac{\alpha(\beta-\alpha)}{\beta-1}}}{d}.
 \end{equation}
 This is proved in \cite[Theorem 3.1]{CHLZ24}, improving previous results in \cite{HLX24, LL21}. Combining this result with Theorem \ref{connectivty bound2}, we obtain the following result about connectivity of amply regular graphs, which can be considered as a complement of \cite{Mad71,BrKo09,BrMe85}.
\begin{corollary}
    Let $G$ be an amply regular graph with parameters $(d,\alpha,\beta)$ such that $1\neq \beta\geq \alpha$. Then the connectivity of $G$ satisfies
    \begin{align}\notag
        k(G)\ge d- 2\left\lfloor \frac{\alpha^2-\alpha}{\beta-1} \right\rfloor -\beta +2.
    \end{align}
%In particular, any amply regular graph $G$ with parameters $(d,0,2)$ has connectivity $k(G)=d$.
\end{corollary}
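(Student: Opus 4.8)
The plan is to combine the curvature lower bound \eqref{eq:CHLZ} for amply regular graphs with Theorem~\ref{connectivty bound2}, which is exactly the setting the corollary is designed for. First I would observe that an amply regular graph with parameters $(d,\alpha,\beta)$ is $d$-regular, so $\delta(G)=d$; moreover any two adjacent vertices have \emph{exactly} $\alpha$ common neighbors and any two vertices at distance $2$ have \emph{exactly} $\beta$ common neighbors, so the hypotheses of Theorem~\ref{connectivty bound2} hold with these very values of $\alpha$ and $\beta$. One should briefly dispose of the degenerate case where $G$ is complete: a complete graph is not amply regular in the sense of having vertices at distance $2$ (or the statement is vacuous / handled separately), so we may assume $G$ is non-complete and connected, and Theorem~\ref{connectivty bound2} applies.

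Next I would simply substitute. Theorem~\ref{connectivty bound2} gives
\begin{align}\notag
    k(G)\geq \left(2\kappa_{LLY}(G)+1\right)d-2\alpha-\beta-2,
\end{align}
and \eqref{eq:CHLZ} gives $\kappa_{LLY}(G)\geq \dfrac{2+\ceil{\frac{\alpha(\beta-\alpha)}{\beta-1}}}{d}$, valid precisely because $1\neq\beta\geq\alpha$. Plugging this in, the factor $d$ cancels against the denominator and we get
\begin{align}\notag
    k(G)\geq 2\left(2+\ceil{\tfrac{\alpha(\beta-\alpha)}{\beta-1}}\right)+d-2\alpha-\beta-2 = d + 2\ceil{\tfrac{\alpha(\beta-\alpha)}{\beta-1}} -2\alpha-\beta+2.
\end{align}

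The only remaining point is a bookkeeping identity between ceilings and floors: I need $2\ceil{\frac{\alpha(\beta-\alpha)}{\beta-1}} - 2\alpha = -2\floor{\frac{\alpha^2-\alpha}{\beta-1}}$, i.e.\ $\ceil{\frac{\alpha(\beta-\alpha)}{\beta-1}} = \alpha - \floor{\frac{\alpha^2-\alpha}{\beta-1}}$. This follows from $\ceil{-x}=-\floor{x}$ together with the algebraic simplification $\frac{\alpha(\beta-\alpha)}{\beta-1} = \frac{\alpha(\beta-1)-\alpha(\alpha-1)}{\beta-1} = \alpha - \frac{\alpha^2-\alpha}{\beta-1}$, so that $\ceil{\frac{\alpha(\beta-\alpha)}{\beta-1}} = \alpha + \ceil{-\frac{\alpha^2-\alpha}{\beta-1}} = \alpha - \floor{\frac{\alpha^2-\alpha}{\beta-1}}$ (using that $\alpha$ is an integer, one can pull it out of the ceiling). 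Substituting this back yields exactly
\begin{align}\notag
    k(G)\ge d- 2\floor{\tfrac{\alpha^2-\alpha}{\beta-1}} -\beta +2,
\end{align}
as claimed. I do not anticipate a genuine obstacle here — the corollary is a direct consequence of two quoted results plus an elementary floor/ceiling manipulation; the only place to be careful is making sure the hypothesis $1\neq\beta\geq\alpha$ is invoked so that \eqref{eq:CHLZ} is legitimately available, and that the non-complete case is the relevant one so Theorem~\ref{connectivty bound2} applies.
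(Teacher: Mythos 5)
Your proposal is correct and matches the paper's (implicit) argument: the corollary is obtained exactly by feeding the bound \eqref{eq:CHLZ} into Theorem \ref{connectivty bound2} with $\delta(G)=d$ and the amply regular parameters, and your identity $\ceil{\frac{\alpha(\beta-\alpha)}{\beta-1}}=\alpha-\floor{\frac{\alpha^2-\alpha}{\beta-1}}$ is the same elementary floor/ceiling step needed to reach the stated form. No gap.
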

%In particular, we have $k(G)\ge \frac{d}{2}-\alpha+1$. On the other hand, we have $k(G)\ge\beta\ge \alpha$. It follows that $k(G)\ge \frac{d+2}{4}$.

Our next result tells that positive Lin-Lu-Yau curvature determines edge-connectivity. We say a graph $G$ has positive Lin-Lu-Yau curvature, if $\kappa_{LLY}(x,y)>0$ holds for any adjacent vertices $x$ and $y$.
\begin{theorem}\label{edge-connectivity}
Let $G$ be a connected graph with minimum vertex degree $\delta(G)$ and  edge-connectivity $k'(G)$. If $G$ has positive Lin-Lu-Yau curvature, then we have $k'(G)=\delta(G)$. 
\end{theorem}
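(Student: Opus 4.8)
The plan is to exploit the structural consequence of positive Lin-Lu-Yau curvature on the way edge cuts can separate a graph, combined with Whitney's inequality $k'(G)\le\delta(G)$ from \eqref{whi}. Since one direction is free, the whole task is to prove $k'(G)\ge\delta(G)$. Suppose for contradiction that there is an edge cut $F$ with $|F|<\delta(G)$, and let $A,B$ be the two sides of the resulting partition of $V$, so every edge of $F$ has one endpoint in $A$ and one in $B$. The first step is to pick a vertex $x\in A$ incident to as many cut-edges as possible and consider its ``boundary behaviour'': because $|F|<\delta(G)\le\deg(x)$, the vertex $x$ still has a neighbour inside $A$, and in fact a counting argument should show we can choose an edge $\{x,y\}\in F$ with $x\in A$, $y\in B$ such that the number of cut-edges incident to $x$ plus those incident to $y$ is small relative to $\deg(x)+\deg(y)$.

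The key step is then to estimate $\kappa_{LLY}(x,y)$ from above using a transport/coupling argument and derive $\kappa_{LLY}(x,y)\le 0$, contradicting positivity. Recall $\kappa_{LLY}(x,y)$ is governed (via the idle-probability limit) by how cheaply one can transport the local measure around $x$ onto that around $y$ at cost measured in graph distance. Since a cut edge $\{x,y\}$ with few other cut-edges means almost all neighbours of $x$ lie in $A$ and almost all neighbours of $y$ lie in $B$, and any $A$--$B$ path must use one of the $\le|F|-1$ remaining cut-edges, the bulk of the mass around $x$ is at distance $\ge 1$ from the bulk of the mass around $y$ with no ``shortcut'' triangles or short 4-cycles available to create positive curvature; constructing an explicit $1$-Lipschitz function (for the dual formulation of the Wasserstein distance) that is constant on $A$ and constant$+1$ on $B$, suitably modified near the few exceptional cut-edges, should force the transport distance to be at least the distance between $x$ and $y$, i.e. $\kappa_{LLY}(x,y)\le 0$. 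This is where the hypothesis $|F|<\delta(G)$ is essential: it guarantees the exceptional cut-edges are too few to produce enough ``cross'' mass to beat the cost of separating $A$ from $B$.

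The main obstacle I expect is the bookkeeping in the transport estimate near the exceptional cut-edges: a vertex of $A$ adjacent to $x$ might itself be incident to cut-edges, so its mass can be transported to $B$ at cost $1$ rather than $2$, and one must show the total ``savings'' from all such exceptional configurations is strictly less than what is needed to make $\kappa_{LLY}(x,y)>0$. Making this quantitative likely requires choosing the cut $F$ of minimum size and the pair $\{x,y\}$ so as to minimise the local count of exceptional cut-edges, then carefully comparing $|F|$ against $\delta(G)$ in the Wasserstein dual. An alternative, possibly cleaner route is to invoke Theorem \ref{connectivty bound} or the fact that positive curvature implies $k(G)\ge\delta(G)$ (via $\kappa_{LLY}^{(2)}\ge\kappa_{LLY}>0$ and rounding), but that only controls vertex-connectivity, so one still needs the extra observation that a minimum edge cut of size $<\delta(G)$ isolates a vertex or a small set, which can again be ruled out by a direct curvature computation on that set.
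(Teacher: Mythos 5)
There is a genuine gap at the heart of your plan: the step where a cut-indicator function (constant on $A$, constant$+1$ on $B$, ``suitably modified'') is supposed to force $\kappa_{LLY}(x,y)\le 0$ cannot work in the form you describe. For any two-valued $1$-Lipschitz function $f=\mathbbm{1}_B$, Kantorovich duality only gives
\begin{equation*}
\kappa_{LLY}(x,y)\;\le\;\frac{|S_1(y)\cap A|}{d_y}+\frac{|S_1(x)\cap B|}{d_x}\;\ge\;\frac{1}{d_x}+\frac{1}{d_y}>0,
\end{equation*}
because $x\in A$ is a neighbour of $y$ and $y\in B$ is a neighbour of $x$; so the unmodified cut function can never certify non-positive curvature, no matter how few cut edges there are, and the ``suitable modification'' is precisely the missing content. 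The paper's proof shows this is not mere bookkeeping: it needs a structural lemma guaranteeing a simple optimal transport plan moving mass $p-\frac{1-p}{d_y}$ along the edge $xy$, lower bounds on the mass transported at distances $\ge 2$ and $\ge 3$ obtained through Hall's theorem (Corollary \ref{Hall}) and K\"onig's theorem (Theorem \ref{Konig}) applied to matchings between $N_x$ and $N_y$ (at distance $1$ and at distance $\le 2$), and then explicit injections of these matchings and of common-neighbour sets $A_{uv}$ into the cut $E(X,Y)$ to force $|E(X,Y)|\ge d_y\ge\delta(G)$. Moreover, the paper chooses the cut edge $xy$ to minimise the number of common neighbours $\alpha_{xy}$ (not the number of incident cut-edges), and in the degenerate case where every cut edge is adjacent to $xy$ the contradiction comes from the curvature of \emph{other} edges --- including an edge $ux$ lying entirely inside $X$, i.e.\ not a cut edge at all. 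This indicates that your strategy of exhibiting a single well-chosen cut edge with $\kappa_{LLY}\le 0$ via one Lipschitz witness is not merely incomplete but aimed at the wrong target in some configurations.

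Your fallback route also rests on a false observation: a minimum edge cut of size smaller than $\delta(G)$ does not isolate a vertex or a small set. If one side $A$ of such a cut had $|A|\le\delta(G)$, each of its vertices would send at least $\delta(G)-|A|+1$ edges across, giving at least $|A|(\delta(G)-|A|+1)\ge\delta(G)$ cut edges, contradicting $|F|<\delta(G)$; hence both sides of the cut necessarily contain more than $\delta(G)$ vertices. So neither Theorem \ref{connectivty bound} nor a local curvature computation on a ``small side'' can close the argument, and the proof really does require the matching-plus-injection machinery (or some equally substantial substitute) developed in the paper.
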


Graphs with positive Lin-Lu-Yau curvature constitute a big class of graphs. The examples mentioned above 
%,including the complete graphs, the graphs obtained by deleting a matching from a complete graph, and the join of two copies of a complete graph with a third graph 
all belong to this class. This class of graphs also contains infinite graphs, including infinite antitrees with suitable growth properties \cite{DLMP20}.

We can not weaken the assumption of positive Lin-Lu-Yau curvature in Theorem \ref{edge-connectivity} to be non-negative Lin-Lu-Yau curvature. For instance, the Lin-Lu-Yau curvature of an infinite path equals to $0$, while the edge-connectivity is smaller than its vertex degree. On the other hand, the Lin-Lu-Yau curvature of a cycle with at least $5$ vertices is $0$, while the edge-connectivity is still equal to its vertex degree. It is natural to ask whether any \emph{finite} graph with non-negative Lin-Lu-Yau curvature has edge-connectivity $k'(G)=\delta(G)$.

% A $d$-regular graph $G$ is called an \emph{amply regular graph} with parameters $(d,\alpha,\beta)$ if any two adjacent vertices have $\alpha$ common neighbors and any two vertices at distance two have $\beta$ common neighbors. All distance-regular graphs are amply regular. It has been shown that any amply regular graph $G$ with parameters $(d,\alpha,\beta)$ such that $1\neq \beta\geq \alpha$ has positive Lin-Lu-Yau curvature \cite{HLX24, LL21}. 
Combining inequality \eqref{eq:CHLZ} with Theorem \ref{edge-connectivity} yields the following interesting corollary. 

 \begin{corollary}
    Any amply regular graph $G$ with parameters $(d,\alpha,\beta)$ such that $1\neq \beta\geq \alpha$ has edge-connectivity $k'(G)=d$.
 \end{corollary}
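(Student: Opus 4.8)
The plan is to combine the explicit curvature lower bound \eqref{eq:CHLZ} with Theorem \ref{edge-connectivity}, which is essentially a two-line deduction. First I would observe that any amply regular graph $G$ with parameters $(d,\alpha,\beta)$ is in particular $d$-regular, so $\delta(G)=d$; moreover such a graph is connected precisely when $\beta\geq 1$, and the hypothesis $1\neq\beta\geq\alpha\geq 0$ together with $\beta$ being a count of common neighbors forces $\beta\geq 2$, hence $G$ is connected. (If $G$ were disconnected, any two vertices in different components would be at distance $\infty$, not $2$, so the parameter $\beta$ would be vacuous; the interesting case is a genuine connected amply regular graph.)

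Next I would invoke \eqref{eq:CHLZ}, namely
\[
\kappa_{LLY}(G)\geq \frac{2+\ceil{\frac{\alpha(\beta-\alpha)}{\beta-1}}}{d}.
\]
Since $\beta\geq \alpha\geq 0$ and $\beta\geq 2>1$, the quantity $\frac{\alpha(\beta-\alpha)}{\beta-1}$ is nonnegative, so its ceiling is nonnegative, and therefore $\kappa_{LLY}(G)\geq \frac{2}{d}>0$. In particular $\kappa_{LLY}(x,y)\geq \kappa_{LLY}(G)>0$ for every edge $\{x,y\}$ of $G$, so $G$ has positive Lin-Lu-Yau curvature in the sense defined just before Theorem \ref{edge-connectivity}.

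Finally I would apply Theorem \ref{edge-connectivity} to conclude $k'(G)=\delta(G)=d$, which is the assertion. There is essentially no obstacle here: the entire content has been front-loaded into \eqref{eq:CHLZ} (from \cite{CHLZ24}) and Theorem \ref{edge-connectivity}. The only point requiring a word of care is the degenerate behavior of the parameter set — one should note that the excluded value $\beta=1$ is exactly the case where the curvature bound \eqref{eq:CHLZ} would break down (division by $\beta-1$) and where, e.g., the infinite path or long cycles show the conclusion can fail — so the hypothesis $\beta\neq 1$ is both what makes \eqref{eq:CHLZ} available and what is genuinely needed. I would also remark in passing that this recovers and extends the classical fact $k'(G)=\delta(G)$ for distance-regular graphs \cite{BrKo09, BrMe85}, since all distance-regular graphs are amply regular, for the sub-range of parameters covered by $1\neq\beta\geq\alpha$.
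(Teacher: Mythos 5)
Your proposal is correct and follows exactly the paper's (implicit) argument: the paper derives this corollary simply by combining the curvature bound \eqref{eq:CHLZ}, which gives $\kappa_{LLY}(G)\geq 2/d>0$ since $\beta\geq\alpha$ makes the ceiling term nonnegative, with Theorem \ref{edge-connectivity} and $\delta(G)=d$. Your additional remarks on why $\beta\geq 2$ and why $\beta\neq 1$ is needed are sound and only elaborate on what the paper leaves unstated.
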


%In \cite[Theorem 1.4]{CLZ24}, certain class of amply regular graphs with parameters $(d,\alpha,\beta)$ such that $\beta\leq \alpha$ has been shown to have Lin-Lu-Yau curvature $\frac{2+\alpha}{d}$. The techniques developed there might be employed to figure out more amply regular graphs with edge-connectivity coinciding with vertex degree.

Conversely, we aim to detect information about Lin-Lu-Yau curvature from the connectivity. Our next result tells that large connectivity implies large Lin-Lu-Yau curvature.

\begin{theorem}\label{LLY lower bound}
    Let $G$ be a graph on $n$ vertices with connectivity $k(G)$ satisfying 
    $$k(G)\ge \frac{n-1}{2}.$$
    Then, for any two adjacent vertices $x$ and $y$ with $d_x\ge d_y$, we have
    \begin{align}\notag
        \kappa_{LLY}(x,y) \ge \frac{2k(G)-n+2}{d_x}.
    \end{align}
\end{theorem}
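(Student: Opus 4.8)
The plan is to construct an explicit $1$-Lipschitz function $f$ on $G$ that witnesses a lower bound on $\kappa_{LLY}(x,y)$ via Kantorovich duality. Recall that for $\eps$ small, $\kappa_{LLY}(x,y) = \frac{1}{1-\eps}\left(1 - W_1(\mu_x^\eps,\mu_y^\eps)\right)$, where $\mu_z^\eps$ puts mass $1-\eps$ at $z$ and mass $\eps/d_z$ on each neighbor of $z$; and $W_1(\mu_x^\eps,\mu_y^\eps) = \sup_{f}\sum_v f(v)(\mu_x^\eps(v)-\mu_y^\eps(v))$ over $1$-Lipschitz $f$. To get a lower bound on the curvature we need an upper bound on $W_1$, so the real task is to produce a \emph{good transport plan} (a coupling) moving $\mu_x^\eps$ to $\mu_y^\eps$ whose cost is at most $1 - \frac{(2k(G)-n+2)\eps}{d_x} + o(\eps)$ as $\eps \downarrow 0$. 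Equivalently, after the standard reduction (see \cite{LLY11} or the Wasserstein subsection), it suffices to analyze the combinatorial transport between the uniform-ish measures on $N(x)\cup\{x\}$ and $N(y)\cup\{y\}$ and exhibit a perfect-ish matching on the symmetric difference that uses edges of length $1$ for as many pairs as possible.

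The key structural input is that $k(G) \ge \frac{n-1}{2}$ forces the graph to be very dense locally: every vertex has degree at least $k(G) \ge \frac{n-1}{2}$, so any two vertices have at least $2k(G) - (n-2) \ge 0$ common neighbors, and more precisely $|N(x)\cap N(y)| \ge d_x + d_y - (n-2)$. First I would set up the transport: the common neighbors of $x$ and $y$, together with the edge $\{x,y\}$ itself, let mass be transported at cost $0$ or absorbed cheaply; the problematic mass is the $\eps/d_x$ sitting on each private neighbor of $x$ (those in $N(x)\setminus(N(y)\cup\{y\})$), which must be matched to mass at $y$ or on private neighbors of $y$. I would match each private neighbor $u$ of $x$ to a private neighbor $w$ of $y$ with $u \sim w$ whenever possible — such an edge exists generously because $u$ and $w$ each have $\ge k(G)$ neighbors in an $n$-vertex graph — and use a Hall-type / counting argument to show that all but a controlled number of private vertices can be paired across an edge of length $1$, while the leftover (of total mass $O(\eps) \cdot$ (small count)) is routed through $x$–$y$ at length $1$ as well. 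Carefully bookkeeping the mass that genuinely travels distance $\ge 1$ versus the mass that stays put (the $1-\eps$ atoms overlap up to the degree mismatch, and all common-neighbor mass can be left fixed) yields $W_1(\mu_x^\eps,\mu_y^\eps) \le 1 - \frac{(2k(G)-n+2)\eps}{d_x}$, and dividing by $1-\eps$ and letting $\eps\downarrow 0$ gives the claim.

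The main obstacle I anticipate is the matching/bookkeeping step: one must show that the private neighbors of $x$ can be injectively matched to vertices that can receive their mass at unit cost, and simultaneously account for the degree imbalance $d_x - d_y$ and the atom masses $1-\eps$ at $x$ and $y$ so that the coefficient of $\eps$ comes out exactly as $2k(G)-n+2$ rather than something weaker. The constraint $k(G)\ge\frac{n-1}{2}$ is precisely what makes a Hall-type condition hold — any set $S$ of private $x$-neighbors has a common-neighborhood large enough to absorb it — so I would isolate this as a lemma: under $k(G)\ge\frac{n-1}{2}$, for adjacent $x,y$ there is an injection from $N(x)\setminus(N[y])$ into $(N(y)\setminus N[x]) \cup \{y\}$ sending all but at most $n-2-k(G)$ elements to an adjacent vertex. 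A secondary subtlety is handling the case $d_x > d_y$ correctly (the measure $\mu_x^\eps$ spreads its $\eps$ more thinly), but this only helps, since it is $d_x$ in the denominator of the target bound; I would verify that the transport still closes with the stated constant. Checking sharpness against the extremal examples alluded to after the theorem (graphs attaining $k(G)=\frac{n-1}{2}$ with zero curvature) would serve as a consistency check on the constant.
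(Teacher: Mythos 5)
Your overall skeleton matches the paper's: build an explicit transport plan that fixes the mass on common neighbors, sends the atom at $x$ to $y$, matches private neighbors of $x$ to private neighbors of $y$ along edges where possible, and routes the remainder using the fact that $\delta(G)\ge k(G)\ge\frac{n-1}{2}$ forces diameter at most $2$. But the step you isolate as the key lemma is where the proposal breaks. You justify the Hall-type matching between $N_x:=S_1(x)\setminus(S_1(y)\cup\{y\})$ and $N_y:=S_1(y)\setminus(S_1(x)\cup\{x\})$ by saying that adjacent pairs ``exist generously because $u$ and $w$ each have $\ge k(G)$ neighbors.'' The degree hypothesis only gives that two private neighbors have a \emph{common} neighbor (distance $\le 2$); it gives no adjacency between $N_x$ and $N_y$ at all. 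Indeed, in the sharp examples of Theorem \ref{example} every edge between $N_x$ and $N_y$ is deleted, so the distance-one matching can be completely empty, and no Hall condition of the kind you propose holds. Moreover, your quantitative target is off: with $c$ unmatched private $x$-neighbors (each of whose mass must travel distance $2$, not ``through $x$--$y$ at length $1$'' --- a private neighbor of $x$ is by definition not adjacent to $y$), the transport computation gives $\kappa_{LLY}(x,y)\ge\frac{|A|+2-c}{d_x}$ with $A=S_1(x)\cap S_1(y)$, so you need $c\le |A|+n-2k$, a bound tied to $|A|$; your claimed deficiency bound $c\le n-2-k$ is insufficient when $|A|$ is small (e.g.\ $|A|$ can be as small as $2k-n+2$ while $n-2-k$ is much larger). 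A smaller point: when $d_x>d_y$ there is not even an injection from $N_x$ into $N_y\cup\{y\}$, so the lemma as stated cannot hold.

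What the correct argument needs --- and what the paper does --- is to make connectivity, not degrees, control the matching. Let $M$ be a maximum matching in $E(N_x,N_y)$ and $B:=V\setminus(S_1(x)\cup S_1(y))$. By K\"onig's theorem there is a vertex cover $X$ of $E(N_x,N_y)$ with $|X|=|M|$, and then $A\cup B\cup X$ together with $x$ or $y$ separates the leftover private neighbors of $x$ from those of $y$; hence $|A|+|B|+|M|+1\ge k(G)$. Combining this with $d_y=|N_y|+|A|+1\ge k(G)$ and $|A|+|N_x|+|N_y|+|B|+2=n$ yields exactly $|M|+|A|-|N_x|\ge 2k(G)-n$, i.e.\ $c=|N_x|-|M|\le |A|+n-2k(G)$, which is the deficiency bound your transport plan needs; the unmatched mass then moves at cost $2$ thanks to the diameter bound. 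Without this connectivity-via-cutset (K\"onig/Menger-type) input, the matching step of your proof does not go through.
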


As a consequence, whenever the connectivity $k(G)$ of a graph $G$ with $n$ vertices is at least $\frac{n-1}{2}$, the Lin-Lu-Yau curvature of $G$ is positive. We notice that the lower bound $\frac{n-1}{2}$ is optimal in the following sense. There exists a graph with $k(G)<\frac{n-1}{2}$ which contains an edge with non-positive Lin-Lu-Yau curvature. Precisely, we have the following result.

%\begin{corollary}
%    Let $G$ be a graph on $n$ vertices with connectivity $k(G)$ satisfying 
%    $$k(G)\ge \frac{n-1}{2}.$$
%    Then we have
%    \begin{align}\notag
%        \kappa_{LLY}(G)>0.
 %   \end{align}
%\end{corollary}

\begin{theorem}\label{example}
    Let $n$ and $k$ be two positive integers such that $n-k$ is odd and
    $$\frac{n+1}{3}\le k \le n-1.$$
    Then there exists a graph $G$ with $n$ vertices and connectivity $k$ such that there are two adjacent vertices $x$ and $y$ with $d_x= d_y=k$ satisfying 
    \begin{align}\notag
        \kappa_{LLY}(x,y) = \frac{2k-n+2}{k}.
    \end{align}
\end{theorem}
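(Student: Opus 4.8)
The plan is to exhibit $G$ by an explicit construction. Put $p=\max\{0,\,2k-n\}$, $m=k-1-p$ and $q=\max\{0,\,n-2k\}$; the hypothesis $\frac{n+1}{3}\le k\le n-1$ makes $p,m,q$ nonnegative integers with $m\ge q$ (when $q>0$ one has $p=0$ and $m-q=3k-n-1\ge 0$; when $p>0$ one has $q=0$), and one checks $-p+q=n-2k$ in all cases. Take $V(G)=\{x,y\}\cup A\cup B\cup C\cup D$ with $|A|=|B|=m$, $|C|=p$, $|D|=q$, so $|V(G)|=2+2m+p+q=n$. Declare the edges: $x\sim y$; $x$ adjacent to every vertex of $A\cup C$ and $y$ adjacent to every vertex of $B\cup C$; each of $A,B,C,D$ a clique; every vertex of $C$ and every vertex of $D$ adjacent to all of $A\cup B$; and, as the only edges inside $A\cup B$, a fixed set of $m-q$ disjoint edges $a_ib_i$ $(1\le i\le m-q)$. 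A direct count gives $d_x=d_y=1+m+p=k$, while every other vertex is seen to have degree at least $k$, so $\delta(G)=k$ (for $k=n-1$ this construction is just $G=K_n$).

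Next I would verify $k(G)=k$. One direction is immediate: deleting $N(x)$ isolates $x$, so $k(G)\le k$. For $k(G)\ge k$ the key structural fact is that $\{x\}\cup A\cup C$ and $\{y\}\cup B\cup C$ are each cliques of size $k$, they overlap in $C$ (size $p$), and in addition they are linked by the edge $xy$, by the $m-q$ matched edges, and through the clique $D$ (each of whose vertices sees all of $A\cup B$). Given $S\subseteq V(G)$ with $|S|=k-1$, at least one vertex of each of these two $k$-cliques survives in $G-S$; a case analysis according to whether $C\setminus S$, respectively $x$ or $y$, survives, and to how $S$ meets $A$, $B$, $D$ and the matched pairs, shows that these survivors always lie in a single component of $G-S$ and that every remaining vertex attaches to it. Making this bookkeeping airtight across all configurations of $S$ is the main obstacle in the proof.

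It remains to compute $\kappa_{LLY}(x,y)$. For the random-walk measures $\mu_x^{\alpha},\mu_y^{\alpha}$ with idleness $\alpha$ close to $1$, the signed measure $\mu_x^{\alpha}-\mu_y^{\alpha}$ is supported on $\{x,y\}\cup A\cup B$, with mass $\alpha-\frac{1-\alpha}{k}$ at $x$, its negative at $y$, $\frac{1-\alpha}{k}$ at each vertex of $A$, and $-\frac{1-\alpha}{k}$ at each vertex of $B$ (the $C$- and $D$-masses cancel). By construction $d(a_i,b_j)=1$ exactly for the matched pairs and $d(a_i,b_j)=2$ otherwise, so a minimum-weight perfect matching between $A$ and $B$ has weight $(m-q)\cdot 1+q\cdot 2=m+q=n-k-1$. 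Moving the bulk mass from $x$ to $y$ and the $A$-mass onto the $B$-mass along such a matching gives $W(\mu_x^{\alpha},\mu_y^{\alpha})\le \alpha+\frac{(1-\alpha)(n-k-2)}{k}$. For the reverse inequality I would invoke Kantorovich duality with the $1$-Lipschitz potential equal to $0$ on $\{y\}\cup B$, to $1$ on $\{x\}\cup C\cup D$ and on the matched vertices of $A$, and to $2$ on the unmatched vertices of $A$; this attains the same value, so $W(\mu_x^{\alpha},\mu_y^{\alpha})=\alpha+\frac{(1-\alpha)(n-k-2)}{k}$. Hence $\kappa_{\alpha}(x,y)=1-W(\mu_x^{\alpha},\mu_y^{\alpha})=(1-\alpha)\frac{2k-n+2}{k}$, and dividing by $1-\alpha$ and letting $\alpha\to 1^-$ yields $\kappa_{LLY}(x,y)=\frac{2k-n+2}{k}$, as required.

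I also note that for $k\ge\frac{n-1}{2}$ the lower bound $\kappa_{LLY}(x,y)\ge\frac{2k-n+2}{k}$ is already furnished by Theorem~\ref{LLY lower bound}, so the duality step is genuinely needed only for $k<\frac{n-1}{2}$, which is exactly the range in which the edge $\{x,y\}$ acquires non-positive Lin-Lu-Yau curvature and hence the range relevant to the optimality of the threshold $\frac{n-1}{2}$.
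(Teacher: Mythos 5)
Your construction, degree count, and curvature computation are fine, and the curvature part in fact takes a different (equally valid) route from the paper: you compute $W(\mu_x^{\alpha},\mu_y^{\alpha})$ exactly by exhibiting a transport plan and a matching $1$-Lipschitz dual potential, whereas the paper builds its graph so that $d_x=d_y=k$ and all $\frac{n-k-1}{2}$ pairs of exclusive neighbours are at distance exactly $2$, and then just quotes the perfect-matching formula of Theorem \ref{regular}. Your graph realizes the same optimal transport cost $n-k-1$ by mixing $m-q$ pairs at distance $1$ with $q$ pairs at distance $2$ (and, incidentally, never uses the parity hypothesis on $n-k$, which is harmless). One small wording slip: ``the only edges inside $A\cup B$'' should read ``the only edges \emph{between} $A$ and $B$,'' since $A$ and $B$ are declared cliques.

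The genuine gap is the lower bound $k(G)\ge k$. The theorem requires the connectivity to equal $k$, and you only prove $k(G)\le k$ (deleting $N(x)$); for the other direction you state that ``a case analysis \dots shows'' every $(k-1)$-set leaves the graph connected and yourself call making this airtight ``the main obstacle.'' That is precisely the part that must be written out, and in your graph it is genuinely case-heavy: unlike the paper's construction, where every surviving vertex of the set $A\cup B$ is adjacent to \emph{all} of $A\cup B\cup N_x\cup N_y$, so any separator of size $k-1$ is immediately forced to be $A\cup B$ or $S_1(x)\setminus\{y\}$ and both are easily excluded, your ``core-dominating'' set is only $C\cup D$, of size $|n-2k|$, which a separator can afford to swallow entirely. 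One is then left analyzing separators inside $\{x,y\}\cup A\cup B$ interacting with the partial matching (e.g.\ budget $3k-n-1$ remaining after deleting all of $D$ in the regime $k<n/2$), which does work out but is exactly the bookkeeping you skipped. As it stands the proposal establishes the curvature value and $k(G)\le k$ but not $k(G)=k$, so it is incomplete; either carry out that separator analysis (a Menger-type count of $k$ internally disjoint paths between each non-adjacent pair would also do), or simplify the construction as the paper does so that the verification becomes a two-case argument.
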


Theorem \ref{LLY lower bound} and Theorem \ref{example} can be considered as a strong extension of a previous result due to Hehl \cite{Hehl25}. For a graph $G$ with $n$ vertices, Hehl proved that if $\delta(G)\geq \frac{2n}{3}-1$, then $\kappa_{LLY}(G)\geq 0$. Moreover, the bound $\frac{2n}{3}-1$ is optimal.

The Ollivier/Lin-Lu-Yau curvature is a discrete analogue of the Ricci curvature in Riemannian geometry. It has many applications in understanding geometric, analytic, and combinatorial properties of graphs. For more detailed discussions and other discrete curvature notions of graphs, we refer to \cite{NR17} and the references therein. Bakry-\'Emery curvature is another discrete Ricci curvature notion on graphs defined via graph Laplacian and Gamma-calculus. Horn, Purcilly and Stevens \cite{HPS24} established recently the following estimate of connectivity via Bakry-\'Emery curvature: $k(G)\geq \frac{2\kappa_{BE}(G)+\delta(G)+5}{8}$, where $\kappa_{BE}(G)$ is the (non-normalized) Bakry-\'Emery curvature lower bound of $G$. Theorem \ref{connectivty bound} is a counterpart result of their estimate in terms of Lin-Lu-Yau curvature. Notice that the Lin-Lu-Yau curvature and Bakry-\'Emery curvature can behave quite differently. The two curvatures of a given graph can even have opposite signs.

Throughout the paper, we use the following notations.  Let $G=(V,E)$ be an undirected simple locally finite graph and $V,E$ respectively denote the set of vertices and edges of $G$. For a vertex $x$ in $G$, we write $S_1(x)$ for the set of vertices adjacent to $x$, while the degree of $x$ is denoted by $d_x:= |S_1(x)|$.
For a vertex set $X$, we define $S_1(X)$ as 
$$S_1(X):=\left( \bigcup_{x\in X} S_1(x)\right) \backslash X.$$
If two edges have a common end, we call these two edges adjacent. The distance between two vertices $x,y$ is denoted by $d(x,y)$. For two vertex sets $X$ and $Y$, the distance between $X$ and $Y$ is denoted by
$$d(X,Y):=\min\{d(x,y)|x\in X,y\in Y\}.$$
When $X=\{x \}$, we simply write $d(x,Y)$ instead of $d(\{ x\},Y)$. For two disjoint vertex sets $X$ and $Y$, we write $E(X,Y)$ for the set of edges with one end in $X$ and the other in $Y$.

\section{Preliminaries}
In this section, we collect useful results on Lin-Lu-Yau curvature and several important theorems about matching of graphs, including Hall's marriage theorem and K\"onig's theorem on maximum matchings and minimum vertex covers.
\subsection{Wasserstein distance and Lin-Lu-Yau curvature}\label{subsection:WassersteinLLY}
We first recall the definition of Wasserstein distance between probability measures on graphs. 
For a graph $G=(V,E)$, a {\it probability measure} on $G$ is a function $\mu:V\to [0,1]$ such that $\sum_{x\in V}\mu(x)=1$.
\begin{definition}[Wasserstein distance]
     Let $G=(V,E)$ be a locally finite graph. Let $\mu_1, \mu_2$ be two probability measures on $G$. The {\it Wasserstein distance} $W(\mu_1, \mu_2)$ between $\mu_1$ and $\mu_2$ is defined as
     \[W(\mu_1,\mu_2)=\inf_{\pi}\sum_{x,y\in V}d(x,y)\pi(x,y),\]
     where the infimum is taken over all the maps $\pi: V\times V\to [0,1]$ satisfying
     \[\mu_1(x)=\sum_{y\in V}\pi(x,y),\ \forall x\in V;\]
     and
     \[\mu_2(y)=\sum_{x\in V}\pi(x,y),\ \forall y\in V.\] Such a map is called a {\it transport plan} from $\mu_1$ to $\mu_2$. We call a transport plan {\it simple} if, for each vertex $x\in V$, we have
     $$\pi(x,x)=\min\{ \mu_1(x), \mu_2(x)\}.$$
\end{definition}
The Kantorovich duality tells the following identity about Wasserstein distance, see, e.g., \cite[Theorem 1.14]{Vil03}.
$$W(\mu_1, \mu_2)=\sup _f \sum_{x \in V} f(x)\left(\mu_1(x)-\mu_2(x)\right),$$
where the supremum is taken over all $1$-Lipschitz functions $f$, which means
\[
|f(x)-f(y)|\leq d(x,y), \quad \forall\,x,y\in V.
 \]
 
We consider the following particular probability measure around a vertex $x\in V$. For a constant $p\in [0,1]$,
    \[\mu_x^p(y):=\left\{
                    \begin{array}{ll}
                      p, & \hbox{if $y=x$;} \\
                      \frac{1-p}{d_x}, & \hbox{if $xy\in E$;} \\
                      0, & \hbox{otherwise.}
                    \end{array}
                  \right.
     \]     
The Ollivier/Lin-Lu-Yau curvature of graphs is defined as follows.
\begin{definition}[$p$-Ollivier curvature and Lin--Lu--Yau curvature] 
Let $G=(V,E)$ be a locally finite graph. For any two distinct vertices $x$ and $y$, the {\it $p$-Ollivier curvature} $\kappa_p(x,y)$, $p\in [0,1]$, is defined as
     \[\kappa_p(x,y)=1-\frac{W(\mu_x^p,\mu_y^p)}{d(x,y)}.\]
     The {\it Lin-Lu-Yau curvature} $\kappa_{LLY}(x,y)$ is defined as
     \[\kappa_{LLY}(x,y)=\lim_{p\to 1}\frac{\kappa_p(x,y)}{1-p}.\]
\end{definition}
By the linearity of $W(\mu_x^p,\mu_y^p)$ for $p\in \frac{1}{1+\max\{d_x,d_y\}}$, see \cite{BCLMP}, and the property of a Monge problem. We directly have the following expression of $\kappa_{LLY}(x,y)$ when $x$ and $y$ have the same degree. See also \cite[Proposition 2.7]{CKKLMP20} or \cite[Theorem 4.3]{Hehl24}.
\begin{theorem}\label{regular}
    Let $G$ be a locally finite graph. Let $x$ and $y$ be two adjacent vertices of equal degree $d$. Then the Lin-Lu-Yau curvature
    \begin{equation*}\label{def2}
    \kappa_{LLY}(x,y)=\frac{1}{d}\left(d+1-\min_{\substack{\phi: N_x\to N_y}}\sum_{v\in N_x}d(v,\phi(v))\right),
    \end{equation*}
    where the minimum is taken over all the bijections $\phi$ from $N_x:=S_1(x)\setminus (S_1(y)\cup\{y \})$ to $N_y:=S_1(y)\setminus (S_1(x)\cup\{x \})$.
\end{theorem}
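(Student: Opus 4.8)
The plan is to evaluate $\kappa_{LLY}(x,y)$ at a single well-chosen value of the idleness parameter $p$, and there to identify the Wasserstein distance with a minimum-cost bipartite matching. Since $x$ and $y$ are adjacent, $d(x,y)=1$, so $\kappa_p(x,y)=1-W(\mu_x^p,\mu_y^p)$. By the idleness result of \cite{BCLMP}, the function $p\mapsto\kappa_p(x,y)$ is piecewise linear and is linear on the interval $[\frac{1}{1+d},1]$ (here $\max\{d_x,d_y\}=d$). Because $\kappa_1(x,y)=1-W(\mu_x^1,\mu_y^1)=1-d(x,y)=0$, linearity on this interval forces $\kappa_p(x,y)$ to be a constant multiple of $(1-p)$ there, so that $\kappa_{LLY}(x,y)=\lim_{p\to1}\frac{\kappa_p(x,y)}{1-p}=\frac{\kappa_{p^*}(x,y)}{1-p^*}$ for the left endpoint $p^*:=\frac{1}{1+d}$. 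Thus it suffices to compute $W(\mu_x^{p^*},\mu_y^{p^*})$.

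The key observation is that at $p=p^*$ the measure $\mu_x^{p^*}$ assigns the common weight $\frac{1}{1+d}$ to every vertex of the closed neighborhood $\{x\}\cup S_1(x)$, since $\frac{1-p^*}{d}=\frac{1}{1+d}=p^*$; likewise $\mu_y^{p^*}$ is uniform on $\{y\}\cup S_1(y)$. On the overlap $\{x,y\}\cup(S_1(x)\cap S_1(y))$ the two uniform measures agree, so the difference $\mu_x^{p^*}-\mu_y^{p^*}$ vanishes there, equals $+\frac{1}{1+d}$ on each vertex of $N_x$, and equals $-\frac{1}{1+d}$ on each vertex of $N_y$. Since the Kantorovich--Rubinstein duality expresses $W$ as a supremum of $\sum_z f(z)(\mu_1(z)-\mu_2(z))$ over $1$-Lipschitz $f$, its value depends only on the signed measure $\mu_1-\mu_2$. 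Hence $W(\mu_x^{p^*},\mu_y^{p^*})$ equals the Wasserstein distance between the uniform measure $\nu_x$ of weight $\frac{1}{1+d}$ on $N_x$ and the uniform measure $\nu_y$ of weight $\frac{1}{1+d}$ on $N_y$, two measures of equal total mass (equal because $d_x=d_y=d$ forces $|N_x|=|N_y|=d-1-|S_1(x)\cap S_1(y)|$).

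Next I would compute $W(\nu_x,\nu_y)$ as a balanced assignment problem. A transport plan between $\nu_x$ and $\nu_y$ is, after rescaling by $1+d$, a doubly stochastic matrix on $N_x\times N_y$, and minimizing the linear cost $\sum_{v,u}d(v,u)\,\pi(v,u)$ over the Birkhoff polytope attains its minimum at a vertex, i.e.\ at a permutation; this is the ``property of a Monge problem'' alluded to in the text. Therefore $W(\nu_x,\nu_y)=\frac{1}{1+d}\min_{\phi}\sum_{v\in N_x}d(v,\phi(v))$, the minimum being over bijections $\phi\colon N_x\to N_y$. Writing $M:=\min_{\phi}\sum_{v\in N_x}d(v,\phi(v))$, we obtain $\kappa_{p^*}(x,y)=1-\frac{M}{1+d}$ and $1-p^*=\frac{d}{1+d}$, whence
\[
\kappa_{LLY}(x,y)=\frac{\kappa_{p^*}(x,y)}{1-p^*}=\frac{(1+d)-M}{d}=\frac{1}{d}\left(d+1-\min_{\phi}\sum_{v\in N_x}d(v,\phi(v))\right),
\]
which is the claimed identity.

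I expect the main obstacle to lie in the two places where standard but nontrivial facts are imported. The first is the linearity of $p\mapsto\kappa_p(x,y)$ on $[\frac{1}{1+d},1]$ from \cite{BCLMP}, which is exactly what collapses the limit defining $\kappa_{LLY}$ into a single evaluation at $p^*$; without it one would have to track the optimal plan as $p\to1$ directly. The second is the reduction from the full measures to the matching on $N_x,N_y$: the clean route is to cancel the shared mass at $x$, $y$, and the common neighbors \emph{via the dual} (so that only $\nu_x,\nu_y$ remain) before invoking Birkhoff--von Neumann. Arguing Monge optimality directly on $\mu_x^{p^*},\mu_y^{p^*}$ would be considerably more delicate, since one would then have to rule out, by a cyclical-monotonicity type argument, that rerouting the balanced mass through $x$, $y$, or the common neighbors could lower the cost.
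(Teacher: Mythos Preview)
Your proof is correct and follows precisely the approach the paper itself indicates: the paper does not give a detailed proof of Theorem~\ref{regular} but merely points to the linearity of $p\mapsto W(\mu_x^p,\mu_y^p)$ on $\left[\frac{1}{1+d},1\right]$ from \cite{BCLMP} together with ``the property of a Monge problem'' (i.e.\ the Birkhoff--von Neumann reduction to a bijection), and cites \cite[Proposition 2.7]{CKKLMP20} and \cite[Theorem 4.3]{Hehl24} for the full statement. Your argument fleshes out exactly this sketch, including the clean dual cancellation of the shared mass on $\{x,y\}\cup(S_1(x)\cap S_1(y))$ at $p^*=\frac{1}{1+d}$.
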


We define the lower Lin-Lu-Yau curvature bound of a graph at various scales as follows.
\begin{definition}\label{defn:LLY_lower_bound}
    Let $G=(V,E)$ be a locally finite graph. For any positive integer $i$, we define the lower Lin-Lu-Yau curvature bound of $G$ at scale $i$ as follows:
    \begin{equation*}
       \kappa_{LLY}^{(i)}(G):=\min_{\substack{x,y\in V\\d(x,y)=i}}\kappa_{LLY}(x,y). 
    \end{equation*}
\end{definition}
When $i=1$, we simply write $\kappa_{LLY}(G)$ instead of $\kappa_{LLY}^{(1)}(G)$.
The following lemma is a reformulation of \cite[Lemma 2.3]{LLY11}, which is proved by the triangle inequality of Wasserstein distance.
\begin{lemma}\label{lemma:curv_long_scale}
 Let $G$ be a locally finite graph. For any positive integer $i$, it holds that
       \begin{equation*}
       \kappa_{LLY}^{(i)}(G)\geq \kappa_{LLY}(G). 
    \end{equation*}
\end{lemma}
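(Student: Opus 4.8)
The plan is to reduce the scale-$i$ curvature to a sum of single-edge curvatures along a geodesic, using the triangle inequality for the Wasserstein distance exactly as the statement suggests. Fix two vertices $x,y$ with $d(x,y)=i$ and choose a geodesic $x=z_0,z_1,\dots,z_i=y$, so that consecutive vertices $z_{j-1},z_j$ are adjacent. For a fixed idleness parameter $p\in[0,1]$, the triangle inequality for $W$ applied along the path gives
\[W(\mu_x^p,\mu_y^p)\le\sum_{j=1}^{i}W(\mu_{z_{j-1}}^p,\mu_{z_j}^p).\]
Since each consecutive pair is at distance one, the definition of the $p$-Ollivier curvature reads $W(\mu_{z_{j-1}}^p,\mu_{z_j}^p)=1-\kappa_p(z_{j-1},z_j)$, while $W(\mu_x^p,\mu_y^p)=i\bigl(1-\kappa_p(x,y)\bigr)$ because $d(x,y)=i$.

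Substituting these expressions into the triangle inequality, the additive constant $i$ cancels on both sides, and dividing by $i$ yields the pointwise-in-$p$ estimate
\[\kappa_p(x,y)\ge\frac{1}{i}\sum_{j=1}^{i}\kappa_p(z_{j-1},z_j),\qquad\text{for every }p\in[0,1].\]
I would then divide both sides by $1-p$ and let $p\to 1$. Each quotient of the form $\kappa_p(\cdot,\cdot)/(1-p)$ converges to the corresponding Lin-Lu-Yau curvature by definition, the limits existing thanks to the piecewise-linearity of $p\mapsto W(\mu^p_x,\mu^p_y)$ recorded just before Theorem \ref{regular}. Hence the inequality passes to the limit and gives
\[\kappa_{LLY}(x,y)\ge\frac{1}{i}\sum_{j=1}^{i}\kappa_{LLY}(z_{j-1},z_j).\]

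To finish, I would bound each edge term from below by $\kappa_{LLY}(G)$: since $z_{j-1}$ and $z_j$ are adjacent, $\kappa_{LLY}(z_{j-1},z_j)\ge\kappa_{LLY}^{(1)}(G)=\kappa_{LLY}(G)$ by the definition of the scale-$1$ lower bound. Averaging over the $i$ edges then collapses the right-hand side to $\kappa_{LLY}(G)$, so $\kappa_{LLY}(x,y)\ge\kappa_{LLY}(G)$; taking the minimum over all pairs $x,y$ at distance $i$ gives $\kappa_{LLY}^{(i)}(G)\ge\kappa_{LLY}(G)$, as desired. The only delicate point is the passage to the limit $p\to 1$ in the third display; this is routine once one knows each single-edge and single-pair limit exists, but it is the one step that genuinely invokes the structural behaviour of $\kappa_p$ rather than a purely formal rearrangement of the triangle inequality.
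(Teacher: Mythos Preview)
Your argument is correct and is precisely the approach the paper has in mind: the paper does not spell out a proof but simply cites \cite[Lemma 2.3]{LLY11} and remarks that it follows from the triangle inequality of the Wasserstein distance, which is exactly your geodesic-splitting computation. One cosmetic remark: the existence of the limits $\lim_{p\to 1}\kappa_p(\cdot,\cdot)/(1-p)$ is already built into the definition of $\kappa_{LLY}$ (established in \cite{LLY11}), so you need not appeal to the linearity statement before Theorem~\ref{regular}, which as written covers only adjacent vertices.
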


\subsection{Matching}
Here we recall several important concepts and results about matching from graph theory, see, e.g., \cite{BM08,Die17}. A {\it matching} in a graph is a set of pairwise non-adjacent edges. For a vertex set $A$, a matching {\it of} $A$ is a matching $M$ such that each vertex in $A$ is incident with an edge in $M$. A {\it vertex cover} of an edge set $E$ is a vertex set $X$ such that each edge in $E$ has at least one end in $X$. Below, we collect some well-known theorems, which are key to our proofs.

\begin{theorem}[Hall's Marriage Theorem]\label{Marriage}
   Let $H=(V,E)$ be a bipartite graph with bipartition $V=V_1\sqcup V_2$. Then $H$ has a matching of $V_1$ if and only if
   $$|S_{1}(A)|\geq |A| \,\,\text{for all}\,\,A\subseteq V_1.$$
\end{theorem}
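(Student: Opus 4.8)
The plan is to establish the two implications separately. The forward (necessity) direction is routine: I would suppose $H$ admits a matching $M$ of $V_1$, so that each $v\in V_1$ is matched to a distinct partner $m(v)\in S_1(\{v\})\subseteq V_2$. Since $v\mapsto m(v)$ is injective, for any $A\subseteq V_1$ the set $\{m(v):v\in A\}$ is a subset of $S_1(A)$ of size exactly $|A|$, whence $|S_1(A)|\geq|A|$. The entire content of the theorem therefore lies in the backward (sufficiency) direction, which I would prove by induction on $|V_1|$, assuming Hall's condition $|S_1(A)|\geq|A|$ for all $A\subseteq V_1$.

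For the induction, the base case $|V_1|=1$ is immediate, since applying the condition to $A=V_1$ shows the single vertex has a neighbor, which we match it to. For the inductive step I would split into two cases according to whether the condition holds with strict slack on all proper nonempty subsets. In the first case, suppose $|S_1(A)|\geq|A|+1$ for every nonempty $A\subsetneq V_1$. Choose any $v\in V_1$ with a neighbor $w\in S_1(\{v\})$, match them, and delete both vertices from $H$. Deleting $w$ removes at most one vertex from any neighborhood, so every $A'\subseteq V_1\setminus\{v\}$ satisfies $|S_1(A')|\geq(|A'|+1)-1=|A'|$ in the smaller graph; the inductive hypothesis supplies a matching of $V_1\setminus\{v\}$, which together with the edge $vw$ matches $V_1$.

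In the remaining case there exists a nonempty proper subset $A_0\subsetneq V_1$ that is \emph{tight}, i.e. $|S_1(A_0)|=|A_0|$. I would first apply the inductive hypothesis to the bipartite graph induced on $A_0\sqcup S_1(A_0)$, whose Hall condition is inherited because $S_1(A)\subseteq S_1(A_0)$ for every $A\subseteq A_0$; this yields a matching of $A_0$ into $S_1(A_0)$. It then remains to match $V_1\setminus A_0$ inside $V_2\setminus S_1(A_0)$, and the key verification is that Hall's condition survives in this residual graph: for $B\subseteq V_1\setminus A_0$, applying the hypothesis to $A_0\cup B$ and using $S_1(A_0\cup B)=S_1(A_0)\cup S_1(B)$ gives $|S_1(B)\setminus S_1(A_0)|=|S_1(A_0\cup B)|-|S_1(A_0)|\geq(|A_0|+|B|)-|A_0|=|B|$. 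Since $A_0$ is proper and nonempty, both $A_0$ and $V_1\setminus A_0$ are strictly smaller than $V_1$, so the inductive hypothesis applies to each part, and the union of the two matchings matches $V_1$.

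The main obstacle is precisely the bookkeeping in the tight-set case: one must confirm that both the block $A_0\sqcup S_1(A_0)$ and its complement independently satisfy Hall's condition so that the two partial matchings, living on disjoint vertex sets, combine without conflict. This rests on the observation that a tight set $A_0$ cleanly decouples the graph into two pieces whose neighborhoods do not interfere, which is exactly what makes the induction close.
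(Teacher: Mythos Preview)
Your proof is correct and is in fact the classical induction-on-$|V_1|$ argument for Hall's theorem, handled cleanly in both the slack and tight-set cases. Note, however, that the paper does not supply its own proof of this statement: Theorem~\ref{Marriage} is quoted there as a standard background result (with a reference to textbooks such as \cite{BM08,Die17}), so there is no in-paper argument to compare against.
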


\begin{corollary}\label{Hall}
    Let $H=(V,E)$ be a bipartite graph with bipartition $V=V_1\sqcup V_2$. Suppose that there is a non-negative integer $c$ such that
    $$|S_{1}(A)|\geq |A|-c \,\,\text{for all}\,\,A\subseteq V_1.$$
    Then $H$ contains a matching of size at least $|V_1|-c$.
\end{corollary}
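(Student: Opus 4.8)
The plan is to deduce this defect version from the ordinary Hall's Marriage Theorem (Theorem \ref{Marriage}) by a standard augmentation trick: enlarge $V_2$ by $c$ auxiliary vertices, each joined to every vertex of $V_1$, so that these new vertices precisely absorb the deficiency $c$ in Hall's condition.

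Concretely, first I would introduce $c$ new vertices $w_1,\dots,w_c$ and form the bipartite graph $H'$ with bipartition $V_1\sqcup V_2'$, where $V_2'=V_2\cup\{w_1,\dots,w_c\}$, each $w_j$ is made adjacent to every vertex of $V_1$, and all edges between $V_1$ and $V_2$ are kept unchanged. The key computation is that $H'$ satisfies the hypothesis of Theorem \ref{Marriage}. For $A=\emptyset$ this is trivial, and for any nonempty $A\subseteq V_1$ every auxiliary vertex lies in the neighborhood of $A$, so writing $S_1^{H'}$ for the neighborhood operator in $H'$ we get $|S_1^{H'}(A)|=|S_1(A)|+c\geq(|A|-c)+c=|A|$. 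Hence Hall's condition holds in $H'$.

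Next I would apply Theorem \ref{Marriage} to obtain a matching $M'$ of $V_1$ in $H'$, that is, a matching saturating every vertex of $V_1$, which therefore consists of exactly $|V_1|$ edges. Since the auxiliary vertices $w_1,\dots,w_c$ number only $c$ and a matching uses each at most once, at most $c$ edges of $M'$ are incident to an auxiliary vertex. Deleting those edges yields a matching $M\subseteq M'$ lying entirely in the original graph $H$ and satisfying $|M|\geq|V_1|-c$, which is the desired conclusion.

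I do not expect a serious obstacle, as the argument is elementary once the augmentation is in place. The only points deserving a little care are the verification of Hall's condition in the edge case $A=\emptyset$, where the inequality with a nonnegative $c$ must be read correctly, and the observation that the conclusion is vacuous when $|V_1|-c\leq 0$, since the empty matching already has size at least any nonpositive integer. An alternative route would proceed via König's theorem relating maximum matchings to minimum vertex covers, but the augmentation argument is cleaner and uses only the tool already stated.
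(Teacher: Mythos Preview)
Your proposal is correct and is essentially identical to the paper's own proof: the paper also adjoins a set of $c$ new vertices to $V_2$, makes each adjacent to every vertex of $V_1$, applies Theorem~\ref{Marriage} to obtain a matching of $V_1$, and then discards the at most $c$ edges meeting the new vertices. Your write-up simply spells out the verification of Hall's condition in the augmented graph in slightly more detail.
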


\begin{proof}
    Let $V_3$ be a set of vertices of size $c$. We construct a new bipartite graph with vertex set $V_1\sqcup (V_2\sqcup V_3)$ from $H$ by adding an edge between every vertex of $V_3$ and every vertex of $v$. By Theorem \ref{Marriage}, the new graph contains a matching of $V_1$. Thus, $H$ contains a matching of size at least $|V_1|-c$.
\end{proof}

\begin{theorem}[K\"onig]\label{Konig}
    Let $G$ be a bipartite graph. The maximum cardinality of a matching in $G$ is equal to the minimum cardinality of a vertex cover of its edges.
\end{theorem}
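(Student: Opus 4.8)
The plan is to derive König's duality from the deficiency version of Hall's theorem already recorded as Corollary \ref{Hall}, rather than from scratch. Write the bipartition as $V = V_1 \sqcup V_2$. First I would dispose of the easy inequality: given any matching $M$ and any vertex cover $C$, every edge of $M$ has an endpoint in $C$, and since the edges of $M$ are pairwise vertex-disjoint, distinct edges of $M$ meet $C$ in distinct vertices. Selecting one such covered endpoint per matching edge gives an injection from $M$ into $C$, so $|M| \le |C|$. In particular, the maximum matching size is at most the minimum vertex cover size, and it suffices to produce a matching and a vertex cover of the same cardinality.

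For that, I would introduce the deficiency
\[
d := \max_{A \subseteq V_1}\bigl(|A| - |S_1(A)|\bigr),
\]
which satisfies $d \ge 0$ by taking $A = \emptyset$. By the very definition of $d$ we have $|S_1(A)| \ge |A| - d$ for every $A \subseteq V_1$, so Corollary \ref{Hall}, applied with $c = d$, yields a matching of size at least $|V_1| - d$.

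It then remains to exhibit a vertex cover of size exactly $|V_1| - d$. Let $A^* \subseteq V_1$ attain the maximum, so that $|A^*| - |S_1(A^*)| = d$, and set $C := (V_1 \setminus A^*) \cup S_1(A^*)$. I would verify that $C$ is a vertex cover: an edge $\{u,v\}$ with $u \in V_1$ and $v \in V_2$ avoiding $C$ would force $u \in A^*$ (since $C \cap V_1 = V_1 \setminus A^*$, because $S_1(A^*) \subseteq V_2$) and $v \notin S_1(A^*)$; but $u \in A^*$ with $v$ adjacent to $u$ makes $v \in S_1(A^*)$, a contradiction. As $S_1(A^*) \subseteq V_2$ is disjoint from $V_1 \setminus A^*$, its size is $|C| = (|V_1| - |A^*|) + |S_1(A^*)| = |V_1| - d$.

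Combining the three facts gives
\[
|V_1| - d \;\le\; (\text{max matching}) \;\le\; (\text{min vertex cover}) \;\le\; |C| \;=\; |V_1| - d,
\]
so all four quantities coincide and König's theorem follows. The only genuinely delicate point is the passage to the reverse inequality: one must read off the optimal cover from a maximum-deficiency set $A^*$ and carry out the bookkeeping showing its cardinality is precisely $|V_1| - d$, matching the size guaranteed by Corollary \ref{Hall}. Everything else reduces to the elementary counting inequality of the first paragraph and a single invocation of the deficiency form of Hall's theorem.
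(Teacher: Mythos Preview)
The paper does not supply its own proof of Theorem~\ref{Konig}; K\"onig's theorem is simply quoted as a standard result from the references \cite{BM08,Die17}, so there is no in-paper argument to compare against.

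That said, your argument is correct and fits nicely with the tools the paper does develop. You use only Corollary~\ref{Hall} (the deficiency form of Hall) together with the elementary inequality $|M|\le |C|$, and the identification of an optimal cover $C=(V_1\setminus A^*)\cup S_1(A^*)$ from a maximum-deficiency set $A^*$ is the standard way to close the gap. One small remark worth making explicit for the reader: in this bipartite setting $S_1(A^*)\subseteq V_2$ automatically (all neighbours of $A^*\subseteq V_1$ lie in $V_2$), so the paper's convention $S_1(X)=(\bigcup_{x\in X}S_1(x))\setminus X$ does not remove anything, and hence $|C|=|V_1\setminus A^*|+|S_1(A^*)|$ is genuinely a disjoint-union count. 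With that, the chain
\[
|V_1|-d \;\le\; \text{max matching} \;\le\; \text{min cover} \;\le\; |C| \;=\; |V_1|-d
\]
is airtight.
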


\section{Positive curvature implies large connectivity}\label{Positive Lin-Lu-Yau curvature implies large connectivity}
In this section, we prove sharp lower bounds of connectivity based on Lin-Lu-Yau curvature. Our results tell that a large positive lower bound of Lin-Lu-Yau curvature on each edge (or between any two vertices at distance two) implies large connectivity. Before our proof, we present a basic lemma.

\begin{lemma}\label{neighbor}
    Let $G$ be a non-complete connected graph with connectivity $k(G)$. Let $S\subset V$ be a vertex set of size $|S|=k(G)$ such that $G-S$ is disconnected, and $X$ is a connected component of $G-S$. Then, for any vertex $u\in S$, $u$ has at least one neighbor in $X$.
\end{lemma}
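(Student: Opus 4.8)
The plan is to argue by contradiction using the minimality of the separating set $S$. Suppose some vertex $u\in S$ has no neighbor in $X$. The key observation is that $u$ is then, in a sense, ``wasted'' as a separator with respect to the component $X$: removing $S\setminus\{u\}$ should already disconnect $X$ from the rest of the graph, contradicting $k(G)=|S|$ being the minimum size of a separating set. So the first step is to set up the relevant vertex sets carefully: let $Y:=V\setminus(S\cup X)$ be the union of the remaining components of $G-S$, and consider $S':=S\setminus\{u\}$, which has size $k(G)-1$.

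Next I would verify that $G-S'$ is still disconnected, which is where the hypothesis that $u$ has no neighbor in $X$ enters. Since $X$ is a connected component of $G-S$, there are no edges between $X$ and $Y$ in $G$, and the only edges leaving $X$ in $G$ go to $S$; by assumption none of those go to $u$, so in fact all edges leaving $X$ go to $S'$. Therefore in $G-S'$ the set $X$ still has no neighbors outside itself, so $X$ is a union of connected components of $G-S'$. To get a genuine disconnection I must also check that $V\setminus(S'\cup X)=Y\cup\{u\}$ is nonempty: this holds because $G-S$ had at least two components, so $Y\neq\emptyset$. Hence $G-S'$ has $X$ as a proper nonempty set of components with nonempty complement, i.e.\ $G-S'$ is disconnected.

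The remaining point is a small subtlety: to invoke the definition of connectivity we need $G-S'$ to be a disconnected graph arising from deleting a vertex set from a \emph{non-complete} connected graph, which is exactly our standing hypothesis on $G$; then $|S'|=k(G)-1<k(G)$ contradicts the definition of $k(G)$ as the minimum number of vertices whose deletion disconnects $G$. This yields the contradiction and completes the proof. I expect the only mild obstacle to be bookkeeping with the edge sets, namely making fully explicit why ``$X$ is a component of $G-S$'' together with ``$u$ has no neighbor in $X$'' forces $X$ to remain separated after deleting only $S'$; everything else is immediate from minimality of $S$.
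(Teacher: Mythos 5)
Your proposal is correct and is essentially the paper's own argument: the paper also removes $u$ from $S$ and observes that, since $u$ has no neighbor in $X$, the graph $G-(S\setminus\{u\})$ remains disconnected, contradicting the minimality of $S$. Your write-up simply spells out the bookkeeping (that all edges leaving $X$ go to $S\setminus\{u\}$ and that the complement of $X$ is nonempty) that the paper leaves implicit.
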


\begin{proof}
    Suppose that $x$ has no neighbors in $X$. Then $G-(S\backslash \{ x \})$ is disconnected, which is contradictory to the minimality of $S$. 
\end{proof}

\begin{proof}[Proof of Theorem \ref{connectivty bound}]
    Let $S\subset V$ be a vertex set of size $|S|=k(G)$ such that $G-S$ is disconnected.
    Let $X$ and $Y$ be two distinct connected components of $G-S$.
    Let $u$ be a vertex in $S$. Let $x$ be a neighbor of $u$ in $X$ and $y$ be a neighbor of $u$ in $Y$ (by Lemma \ref{neighbor}, such $x$ and $y$ must exist).
    Then, we have
    \begin{align*}
        &d(v,X)=1,\,\,\text{for any}\,\,v\in S_1(x)\cap S;\\
        &d(v,X)\geq 1,\,\,\text{for any}\,\,v\in S_1(y)\cap S.
    \end{align*}
    According to Kantorovich duality, we derive
    \begin{align*}
     W(\mu_y^p,\mu_x^p)&= \sup_{\substack{f: V\to \mathbb{R}\\1-\text{Lipschitz}}}\sum_{v\in V}f(v)\left(\mu_y^p(v)-\mu_x^p(v)\right)\\
     &\geq \sum_{v\in V}d(v,X)\mu_{y}^p(v)- \sum_{v\in V}d(v,X)\mu_{x}^p(v)\\
     &\geq  \frac{|S_1(y)\cap S|(1-p)}{d_y}+\left(p+\frac{|S_1(y)\cap Y|(1-p)}{d_y}\right)\times 2-\frac{|S_1(x)\cap S|(1-p)}{d_x}\\
     &= 2-\frac{|S_1(y)\cap S|(1-p)}{d_y}-\frac{|S_1(x)\cap S|(1-p)}{d_x} \\
     &\ge 2-\frac{1-p}{\delta(G)}\left(|S_1(y)\cap S|+ |S_1(x)\cap S|\right).
    \end{align*}
    In the above equality, we have employed the fact that $d_y=|S_1(y)\cap S|+|S_1(y)\cap Y|$.

    By the definition of Lin-Lu-Yau curcature, we estimate
    \begin{align}\label{eq:kLLY}
        \kappa_{LLY}(y,x)=\lim_{p\to 1}\frac{1}{1-p}\left(1-\frac{W(\mu_y^p,\mu_x^p)}{d(x,y)}\right)\leq \frac{|S_1(y)\cap S|+ |S_1(x)\cap S|}{2\delta(G)}.
    \end{align}
Therefore, we derive
\begin{align}\label{eq:bound2}\notag
    k(G)=|S|&\geq \max\{|S_1(x)\cap S|, |S_1(y)\cap S|\}
    \\ &\geq \frac{1}{2}(|S_1(x)\cap S|+|S_1(y)\cap S|)\geq \delta(G)\kappa_{LLY}^{(2)}(G).
\end{align}
We complete the proof.
\end{proof}

Building upon the above proof, we further derive the following estimate. 
\begin{theorem}\label{thm:conn_beta}
Let $G$ be a non-complete connected graph with minimum degree $\delta(G)$ and connectivity $k(G)$.
If any two vertices at distance $2$ have at most $\beta$ common neighbors, then
    \begin{equation}\notag
        k(G)\geq 2\delta(G)\kappa_{LLY}^{(2)}(G)-\beta.
    \end{equation}   
\end{theorem}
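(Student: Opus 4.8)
The plan is to refine the argument used for Theorem \ref{connectivty bound} by exploiting the hypothesis on common neighbors of vertices at distance two. I keep the exact setup from that proof: choose a separating set $S\subset V$ with $|S|=k(G)$ so that $G-S$ is disconnected, let $X$ and $Y$ be two distinct connected components of $G-S$, pick $u\in S$, and use Lemma \ref{neighbor} to obtain a neighbor $x\in X$ of $u$ and a neighbor $y\in Y$ of $u$. Since $x$ and $y$ lie in distinct components of $G-S$ they are non-adjacent, while the path $x\,u\,y$ has length two; hence $d(x,y)=2$ and therefore $\kappa_{LLY}^{(2)}(G)\le\kappa_{LLY}(x,y)$.

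Next I would invoke inequality \eqref{eq:kLLY} established in the proof of Theorem \ref{connectivty bound}, namely
\[
\kappa_{LLY}(y,x)\le\frac{|S_1(y)\cap S|+|S_1(x)\cap S|}{2\delta(G)},
\]
so that $2\delta(G)\kappa_{LLY}^{(2)}(G)\le|S_1(x)\cap S|+|S_1(y)\cap S|$. The crucial new observation is that every common neighbor of $x$ and $y$ must lie in $S$: a vertex $w$ adjacent to $x$ with $w\notin S$ would lie in the same component of $G-S$ as $x$, i.e.\ in $X$, and adjacency to $y$ would likewise force $w\in Y$, contradicting $X\cap Y=\emptyset$. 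Consequently $S_1(x)\cap S_1(y)\subseteq S$, and by hypothesis $|S_1(x)\cap S_1(y)|\le\beta$.

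Finally, inclusion--exclusion applied inside $S$ gives
\[
|S_1(x)\cap S|+|S_1(y)\cap S|=\bigl|(S_1(x)\cup S_1(y))\cap S\bigr|+\bigl|S_1(x)\cap S_1(y)\cap S\bigr|\le |S|+\beta=k(G)+\beta,
\]
where we used $S_1(x)\cap S_1(y)\cap S=S_1(x)\cap S_1(y)$. Combining the two displayed inequalities yields $2\delta(G)\kappa_{LLY}^{(2)}(G)\le k(G)+\beta$, which is the claim. There is no serious obstacle here; the only point requiring care is the verification that the common neighbors of $x$ and $y$ are trapped in $S$, and that the inclusion--exclusion step loses nothing because the pairwise intersection already sits inside $S$.
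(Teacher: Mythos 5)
Your proof is correct and is essentially the paper's own argument: reuse the separator $S$, the vertices $x,y$ at distance $2$ from the proof of Theorem \ref{connectivty bound}, and inequality \eqref{eq:kLLY}, then replace the final step by the inclusion--exclusion bound $|S|\ge |S_1(x)\cap S|+|S_1(y)\cap S|-\beta$ using that $x,y$ have at most $\beta$ common neighbors. Your extra observation that all common neighbors of $x$ and $y$ lie in $S$ is fine but not needed, since $|S_1(x)\cap S_1(y)\cap S|\le\beta$ already follows from the hypothesis.
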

\begin{proof} In the proof of Theorem \ref{connectivty bound}, instead of the estimate \eqref{eq:bound2}, we derive from \eqref{eq:kLLY} that
\begin{equation*}\label{eq:bound1}
    k(G)=|S|\geq |S_1(x)\cap S|+|S_1(y)\cap S|-\beta\geq 2\delta(G)\kappa_{LLY}^{(2)}(G)-\beta.
\end{equation*}
This completes the proof.
\end{proof}

Theorem \ref{thm:conn_beta} is sharp for the Cartesian product $K_n\times K_n$. Indeed, this is an amply regular graph with parameters $(d=2(n-1), \alpha=n-2, \beta=2)$. By \cite[Example 1 and Theorem 3.1]{LLY11}, we have $\kappa_{LLY}(G)=\frac{n}{2(n-1)}$. Therefore, the lower bound $2\delta(G)\kappa_{LLY}^{(2)}(G)-\beta\geq 2n-2=d$ forces $k(K_n\times K_n)=d$.

\begin{proof}[Proof of Theorem \ref{connectivty bound2}]
    Let $S\subset V$ be a vertex set of size $|S|=k(G)$ such that $G-S$ is disconnected.
    Let $u$ be a vertex in $S$, and let $d_u^0:=|S_1(u)\cap S|$. Let $X$ and $Y$ be two connected components of $G-S$. By Lemma \ref{neighbor}, there exist $x\in X$ and $y\in Y$ such that both $x$ and $y$ are adjacent to $u$.
    %\begin{align}\label{du-}
        %|S_1(u)\cap X|\le \frac{d_u-d_u^0}{2}.
    %\end{align}
    %By Lemma \ref{neighbor}, $u$ has at least one neighbor $x$ in $X$. 
    Set 
    $$Z:=\left(\left(S_1(x)\cap X\right)\backslash S_1(u)\right)\cup \{x \}.$$
    According to Kantorovich duality, we derive
    \begin{align*}
     W(\mu_u^p,\mu_x^p)= &\sup_{\substack{f: V\to \mathbb{R}\\1-\text{Lipschitz}}}\sum_{v\in V}f(v)\left(\mu_u^p(v)-\mu_x^p(v)\right)\\
     \geq &\sum_{v\in V}d(v,Z)\mu_{u}^p(v)- \sum_{v\in V}d(v,Z)\mu_{x}^p(v)\\
     \geq &\ p+\frac{d_u^0(1-p)}{d_u}+\frac{(|S_1(u)\cap X|-1)(1-p)}{d_u}+\frac{2(d_u-|S_1(u)\cap X|-d_u^0)(1-p)}{d_u}
     \\ &-\frac{|S_1(x)\cap S|(1-p)}{d_x}-\frac{|S_1(u)\cap S_1(x)\cap X|(1-p)}{d_x}.
    \end{align*}
    Let $\alpha_{ux}^-:=|S_1(u)\cap S_1(x)\cap X|$. By the definition of Lin-Lu-Yau curvature, we have
    \begin{align}\notag
        \kappa_{LLY}(u,x)&=\lim_{p\to 1}\frac{1-W(\mu_u^p,\mu_x^p)}{1-p}
        \\ \label{ux} &\leq \frac{|S_1(u)\cap X|+ d_u^0+1}{d_u}+\frac{|S_1(x)\cap S|+\alpha_{ux}^-}{d_x}-1.
    \end{align}
    Let $\alpha_{uy}^-:=|S_1(u)\cap S_1(y)\cap Y|$. By symmetry, we have
    \begin{align}\label{uy}
        \kappa_{LLY}(u,y) \leq \frac{|S_1(u)\cap Y|+ d_u^0+1}{d_u}+\frac{|S_1(y)\cap S|+\alpha_{uy}^-}{d_y}-1.
    \end{align}
    
    Note that $$|S_1(u)\cap X|+|S_1(u)\cap Y|+ d_u^0\le d_u.$$
    Summing the inequalities \eqref{ux} and \eqref{uy} gives
    \begin{align}\notag
        2\kappa_{LLY}(G)&\le \frac{d_u^0+2}{d_u} +\frac{|S_1(x)\cap S|+\alpha_{ux}^-}{d_x}
        + \frac{|S_1(y)\cap S|+\alpha_{uy}^-}{d_x}-1 
        \\ \label{2K} &\le \frac{1}{\delta}\left( d_u^0+2+ |S_1(x)\cap S|+\alpha_{ux}^- +|S_1(y)\cap S|+\alpha_{uy}^- \right)-1.
    \end{align}
    Let $\alpha_{ux}^+:=|S_1(u)\cap S_1(x)\cap S|$ and $\alpha_{uy}^+:=|S_1(u)\cap S_1(y)\cap S|$.
    Then, we have
    \begin{align}\label{alpha}
        \alpha_{ux}^- +\alpha_{ux}^+ \le \alpha\ 
        {\rm and}\ \alpha_{uy}^- +\alpha_{uy}^+ \le \alpha.
    \end{align}
    According to the inclusion-exclusion principle, we deduce that
    \begin{align}\notag
        k(G)=|S|&\ge |(S_1(u)\cap S)\cup (S_1(x)\cap S)\cup (S_1(y)\cap S)|
        \\ \label{inclusion-exclusion} &\ge d_u^0+ |S_1(x)\cap S|+ |S_1(y)\cap S|-\alpha_{ux}^+ -\alpha_{uy}^+ -\beta.
    \end{align}
    In the above inequality, we have used the assumption that $|S_1(x)\cap S_1(y) \cap S|\le \beta$. 

    Summing up the inequalities \eqref{alpha} and \eqref{inclusion-exclusion}, we have
    $$d_u^0+ |S_1(x)\cap S|+\alpha_{ux}^- +|S_1(y)\cap S|+\alpha_{uy}^- \le k(G)+2\alpha +\beta.$$
    Substituting the above inequality into the  inequality \eqref{2K}, we derive
    \begin{align}\notag
        2\kappa_{LLY}(G)\le \frac{1}{\delta}\left( k(G)+2\alpha +\beta+2 \right)-1,
    \end{align}
    completing the proof.
\end{proof}

%\begin{remark}
%Similar results as in Theorem \ref{connectivty bound} and Theorem \ref{thm:conn_beta} also hold for \emph{non-normalized} Lin-Lu-Yau curvature. M\"unch and Wojciechowski \cite{MW19} reformulate the Lin-Lu-Yau curvature in terms of graph Laplacians as follows. For any two vertices $x,y$, we have
%\[\kappa_{LLY}(x,y)=\inf_{\substack{f: V\to \mathbb{R}, 1-\text{Lipschitz}\\ \nabla_{yx}f=1}}\nabla_{xy}\Delta f,\]
%where $\nabla_{xy}f=\frac{f(x)-f(y)}{d(x,y)}$ and $\Delta$ is the normalized graph Laplacian defined via $\Delta f(x)=\frac{1}{d_x}\sum_{y:y\sim x}(f(y)-f(x))$. If we replace the normalized Laplacian $\Delta$ by the non-normalized Laplacian $\mathcal{L}$ defined via $\mathcal{L} f(x)=\sum_{y:y\sim x}(f(y)-f(x))$, we obtain the non-normalized Lin-Lu-Yau curvature of two vertices $x$ and $y$:
%\[\kappa^{\mathcal{L}}_{LLY}(x,y)=\inf_{\substack{f: V\to \mathbb{R}, 1-\text{Lipschitz}\\ \nabla_{yx}f=1}}\nabla_{xy}\mathcal{L} f.\]
%Let $x,y$ and $X$ be as in the proof of Theorem \ref{connectivty bound}. Employing the $1$-Lipschitz function $d(\cdot, X): V\to \mathbb{R}$, we derive that $\kappa_{LLY}^{\mathcal{L}}(x,y)\leq \nabla_{xy}\mathcal{L}d(\cdot, X)$. This leads to 
%\[k(G)\geq \inf_{\substack{x,y\in V\\d(x,y)=2}}\kappa_{LLY}^{\mathcal{L}}(x,y),\]
%and 
%\[k(G)\geq 2\inf_{\substack{x,y\in V\\d(x,y)=2}}\kappa_{LLY}^{\mathcal{L}}(x,y)-\beta,\]
%where $\beta$ is the maximal possible number of common neighbors of any two vertices at distance two.
%\end{remark}

%A discussion on the examples: Join of two copies of $K_n$ with a third graph.

\section{Positive curvature determines edge-connectivity}
In this section, we prove Theorem \ref{edge-connectivity}. We first show the following lemma.
\begin{lemma}
    Let $G=(V,E)$ be a connected graph. Let $x$ and $y$ be two adjacent vertices in $G$ with $d_x\ge d_y$. For any $p\in \left[ \frac{1}{1+d_y},1 \right]$, there is a simple optimal transport plan from $\mu_x^p$ to $\mu_y^p$ such that $$\pi(x,y)=p-\frac{1-p}{d_y}.$$ 
\end{lemma}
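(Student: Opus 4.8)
The goal is to produce a \emph{simple} optimal transport plan $\pi$ from $\mu_x^p$ to $\mu_y^p$ with the prescribed mass $\pi(x,y)=p-\frac{1-p}{d_y}$ on the edge $xy$. First I would recall that for $p\in\bigl[\frac{1}{1+d_y},1\bigr]$ we have $p\ge \frac{1-p}{d_y}$, so the claimed value $\pi(x,y)=p-\frac{1-p}{d_y}$ is non-negative, and moreover it does not exceed $\mu_x^p(x)=p$ nor $\mu_y^p(x)=\frac{1-p}{d_y}$; wait — one must be careful: a transport plan entry $\pi(x,y)$ is bounded by $\min\{\mu_x^p(x),\mu_y^p(y)\}=\min\{p,\frac{1-p}{d_y}\}=\frac{1-p}{d_y}$, and $p-\frac{1-p}{d_y}\le \frac{1-p}{d_y}$ is equivalent to $p\le \frac{2(1-p)}{d_y}$, which fails for $p$ near $1$. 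So the correct reading is that we must first \emph{transfer} the diagonal mass via the simplicity condition $\pi(x,x)=\min\{\mu_x^p(x),\mu_y^p(x)\}=\frac{1-p}{d_y}$ (since $x\in S_1(y)$, $\mu_y^p(x)=\frac{1-p}{d_y}\le p$), and then the remaining mass at $x$ to be moved away is exactly $p-\frac{1-p}{d_y}$; the claim is that \emph{all} of this residual mass can be sent to $y$. This reframing is the conceptual heart of the argument.

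Next I would set up the combinatorial structure. After fixing the simple part $\pi(v,v)=\min\{\mu_x^p(v),\mu_y^p(v)\}$ for every $v$, the leftover source measure $\tilde\mu_x$ is supported on $N_x\cup\{x,y\}$ (where $N_x=S_1(x)\setminus(S_1(y)\cup\{y\})$) and the leftover target measure $\tilde\mu_y$ is supported on $N_y\cup\{x,y\}$, with $N_x,N_y$ as in Theorem \ref{regular}. The plan is to route the residual mass $p-\frac{1-p}{d_y}$ directly from $x$ to $y$ (cost $1$ per unit, and this is the cheapest possible since $d(x,y)=1$), then handle the mass on $N_x$ versus $N_y$ using a minimum-cost bijection $\phi:N_x\to N_y$ as in Theorem \ref{regular}, together with any residual bookkeeping at $x$ and $y$ themselves (the mass $\frac{1-p}{d_x}-\frac{1-p}{d_y}\ge 0$ that $x$ as a neighbor of $y$ wants to receive but $x$ as a neighbor of $x$ need not fully supply, etc.). I would verify this candidate $\pi$ is a genuine transport plan by checking the marginal constraints vertex by vertex, and that it is simple by construction.

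The remaining — and main — task is optimality: showing $W(\mu_x^p,\mu_y^p)=\sum d(u,v)\pi(u,v)$ for this $\pi$. For this I would invoke Kantorovich duality and exhibit a $1$-Lipschitz potential $f$ achieving the same value, or equivalently verify complementary slackness: $f$ should satisfy $f(u)-f(v)=d(u,v)$ on every edge of the support of $\pi$. The natural choice is $f(v)=d(v,\{x\}\cup(S_1(x)\setminus S_1(y)))$ or a close variant — a function that is $0$ on $x$ and its private neighbors, $1$ on $y$ and the common neighbors, $\ge 1$ further out — and the key point is that moving residual mass from $x$ to $y$ pays exactly $f(y)-f(x)=1$, while the $N_x\to N_y$ transport is optimal by the defining property of $\phi$ in Theorem \ref{regular}. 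The delicate part will be confirming that the $1$-Lipschitz potential is consistent with \emph{both} the direct $x\to y$ transport and the optimal matching on $N_x,N_y$ simultaneously, i.e.\ that there is no cheaper global rearrangement that mixes the two; I expect one argues this by a lower bound on $W(\mu_x^p,\mu_y^p)$ of the form: any transport must move at least mass $p-\frac{1-p}{d_y}$ out of the ``$0$-level set'' of $f$ across to the ``$\ge 1$'' side, giving $W\ge (p-\frac{1-p}{d_y})+(\text{matching cost})\cdot\frac{1-p}{d_x}$, matched by our $\pi$. Handling the edge cases where $d_x=d_y$ (so the extra residual terms vanish) versus $d_x>d_y$ cleanly, and making sure the potential $f$ is genuinely $1$-Lipschitz on the whole graph (not just near $x,y$), are the fiddly points I would be most careful about.
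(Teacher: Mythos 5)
Your reframing of the statement is correct: since $\pi(x,x)=\min\{\mu_x^p(x),\mu_y^p(x)\}=\frac{1-p}{d_y}$ for a simple plan, the claim is exactly that the residual mass $p-\frac{1-p}{d_y}$ at $x$ can all be sent to $y$ in some optimal plan. (As an aside, your intermediate worry was based on a slip: the cell bound is $\min\{\mu_x^p(x),\mu_y^p(y)\}=\min\{p,p\}=p$, not $\frac{1-p}{d_y}$; you confused $\mu_y^p(y)$ with $\mu_y^p(x)$.) However, the route you propose — build an explicit candidate plan and certify its optimality by a $1$-Lipschitz potential — has a genuine gap precisely at the step you flag as ``delicate,'' and I do not see how to close it as described. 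First, the candidate potential $f(v)=d\bigl(v,\{x\}\cup(S_1(x)\setminus S_1(y))\bigr)$ only takes values $0,1,2$, so complementary slackness $f(\phi(u))-f(u)=d(u,\phi(u))$ fails whenever the optimal bijection moves mass over distance $3$ (which does occur); a single explicit potential of this shape cannot certify the plan in general. Second, your proposed lower bound $W\ge\bigl(p-\frac{1-p}{d_y}\bigr)+(\text{matching cost})\cdot\frac{1-p}{d_x}$ is exactly the assertion that mixing the two transports (sending some of $x$'s residual mass into $N_y$ at cost $2$ while routing some $N_x$-mass to $y$ at cost $2$) is never strictly cheaper than $1+3$; justifying this is an exchange argument in disguise, and you never supply it. Third, when $d_x>d_y$ your construction is not even fully specified: $|N_x|\neq|N_y|$ in general, the residual target measure also charges the common neighbors and leaves an extra deficit $\frac{1-p}{d_y}-\frac{1-p}{d_x}$ at $y$, and Theorem \ref{regular} (the bijection formula) only applies to equal degrees, so ``residual bookkeeping'' hides the real work.

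The paper avoids all of this by not constructing any explicit optimal plan. It takes a simple \emph{optimal} plan $\pi$ (existence from Villani / the idleness paper) chosen to maximize $\pi(x,y)$, and argues by contradiction: if $\pi(x,y)<p-\frac{1-p}{d_y}$, then there are $u,v\notin\{x,y\}$ with $\pi(x,u)\ge\epsilon$ and $\pi(v,y)\ge\epsilon$; simplicity forces $\pi(w,y)=0$ for every common neighbor $w$, hence $v\in S_1(x)\setminus S_1(y)$ and so $d(v,y)\ge 2$, while $u\in S_1(y)$. The swapped plan $\pi_1=\pi-\epsilon(\delta_{xu}+\delta_{vy}-\delta_{xy}-\delta_{vu})$ is again a simple transport plan, and a two-case check ($u\in S_1(x)\cap S_1(y)$ versus $u\in S_1(y)\setminus S_1(x)$) shows its cost does not increase, contradicting the maximality of $\pi(x,y)$. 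If you want to salvage your approach, the cleanest fix is to replace the dual-certificate step with exactly this kind of local exchange argument applied to an extremal optimal plan, rather than trying to exhibit an optimal plan and an optimal potential simultaneously.
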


\begin{proof}
    The existence of a simple optimal transport plan from $\mu_x^p$ to $\mu_y^p$ can be guaranteed by \cite[Corollary 1.16]{Vil03}, see also \cite[Lemma 4.1]{BCLMP}. Let $\pi$ be a simple optimal transport plan from $\mu_x^p$ to $\mu_y^p$ such that $\pi(x,y)$ is maximal. Since $\pi(x,x)=\min\{\mu_x^p(x),\mu_y^p(x)\} =\frac{1-p}{d_y}$, if $\pi(x,y)\ne p-\frac{1-p}{d_y}$, then $\pi(x,y)< p-\frac{1-p}{d_y}$. There is a positive constant $\epsilon$ and two vertices $u,v$ in $V\backslash\{x,y\}$ such that $\pi(x,u)\ge \epsilon$ and $\pi(v,y)\ge \epsilon$. 

    For any vertex $w\in S_1(x)\cap S_1(y)$, we have $\pi(w,w)=\min\{\mu_x^p(w),\mu_y^p(w)\} =\mu_x^p(w)$, and hence $\pi(w,y)=0$. Since $\pi(v,y)>0$, we have $v\in S_1(x)\backslash S_1(y)$.
    For any two vertices $w$ and $z$ in $G$, let $\delta_{wz}: V\times V\to [0,1]$ be a map defined as follows:
    \begin{center}
    $\delta_{wz}(a,b)=\begin{cases}
    1, &{\rm if}\ a=w,b=z;\\
    0, &{\rm otherwise}.
    \end{cases}$
    \end{center}
    Now set $\pi_1:=\pi-\epsilon(\delta_{xu} +\delta_{vy}- \delta_{xy}-\delta_{vu})$. Then, $\pi_1$ is a simple transport plan from $\mu_x^p$ to $\mu_y^p$. If $u\in S_1(x)\cap S_1(y)$, then
    \begin{align}\notag
        \sum_{w,z\in V}d(w,z)\pi(w,z)-\sum_{w,z\in V}d(w,z)\pi_1(w,z)\ge\epsilon(1+2-1-2)=0.
    \end{align}
    If $u\in S_1(y)\backslash S_1(x)$, then we have
    \begin{align}\notag
        \sum_{w,z\in V}d(w,z)\pi(w,z)-\sum_{w,z\in V}d(w,z)\pi_1(w,z)\ge\epsilon(2+2-1-3)=0.
    \end{align}
    Thus, $\pi_1$ is also an optimal transport plan from $\mu_x^p$ to $\mu_y^p$. However, $\pi_1(x,y)\ge \pi(x,y)+\epsilon$, which contradicts the selection of $\pi$.
\end{proof}

\begin{proof}[Proof of Theorem \ref{edge-connectivity}]
Let $G=(V,E)$ be a connected graph with edge-connectivity $k'$ and minimum degree $\delta$.
Suppose that $G$ has positive Lin-Lu-Yau curvature. For a contradiction, we assume that $k'\le \delta -1$. By the definition of edge-connectivity, we can divide $V$ into two non-empty parts $V=X\cup Y$ such that $|E(X,Y)|=k'$. For any two vertices $u$ and $v$ in $G$, we denote by $\alpha_{uv}$ the number of common neighbors of $u$ and $v$. Let $xy$ be an edge in $E(X,Y)$ with $x\in X$ and $y\in Y$ such that
$$\alpha_{xy}=\min\{\alpha_{uv}| uv\in E(X,Y) \}.$$
We divide the discussion into two cases according to whether there is an edge in $E(X,Y)$ that is not adjacent to $xy$.

\noindent\textbf{Case 1:} There is at least one edge in $E(X,Y)$ that is not adjacent to $xy$. 

Without lose of generality, assume that $d_x\ge d_y$. Let us set $N_x:=S_1(x)\backslash(S_1(y)\cup \{y\})$ and $N_y:=S_1(y)\backslash(S_1(x)\cup \{x\})$.
Let $M_1$ be a matching of maximum size in $E(N_x,N_y)$. We construct a bipartite graph $H$ as follows. The vertex set is $V(H):=N_x\cup N_y$. Two vertices $u\in N_x$ and $v\in N_y$ are adjacent in $H$ if and only if the distance between them is at most two in $G$. Let $M_2$ be a matching of maximum size in $H$.

\begin{claim}\label{claim1}
    We have $|M_1|+|M_2|\ge 2d_y-3\alpha_{xy}-3$.
\end{claim}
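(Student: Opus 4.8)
The plan is to bound $|M_1|$ and $|M_2|$ separately from below in terms of $d_y$ and $\alpha_{xy}$, and then add the two estimates. Throughout, set $A:=S_1(x)\cap S_1(y)$ (common neighbors, so $|A|=\alpha_{xy}$), and recall $N_x=S_1(x)\setminus(S_1(y)\cup\{y\})$, $N_y=S_1(y)\setminus(S_1(x)\cup\{x\})$, so that $|N_x|=d_x-\alpha_{xy}-1\ge |N_y|=d_y-\alpha_{xy}-1$.

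\textbf{Lower bound for $|M_2|$ via positive curvature.} The bipartite graph $H$ records pairs $(u,v)\in N_x\times N_y$ with $d_G(u,v)\le 2$. A maximum matching $M_2$ in $H$ extends to a bijection $\phi\colon N_x'\to N_y'$ on equal-size subsets with $d(v,\phi(v))\le 2$ for matched $v$; unmatched vertices of $N_x$ and of $N_y$ sit at distance $3$ from each other in the optimal assignment. Using the preceding lemma (a simple optimal transport plan with $\pi(x,y)=p-\frac{1-p}{d_y}$) together with the characterization of $\kappa_{LLY}$ for adjacent vertices, one gets an upper bound on the transport cost from $\mu_x^p$ to $\mu_y^p$ in terms of $|M_2|$, $d_x$, $d_y$, $\alpha_{xy}$; imposing $\kappa_{LLY}(x,y)>0$, i.e. $\kappa_{LLY}(x,y)\ge \frac{1}{d_x}$ (the curvature is a multiple of $1/d_x$ when $d_x\ge d_y$, by the Monge/linearity structure), forces $|M_2|$ to be large, roughly $|M_2|\ge d_y-2\alpha_{xy}-1$ or a comparable expression. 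I would carry out the transport-plan bookkeeping carefully: move mass $p-\frac{1-p}{d_y}$ along $xy$, match the $A$-mass to itself, the $M_2$-matched mass at cost $2$, and the leftover at cost $3$, and compare with $W(\mu_x^p,\mu_y^p)=d(x,y)-(1-p)\kappa_{LLY}(x,y)+o(1-p)$.

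\textbf{Lower bound for $|M_1|$ via Hall/König and Case 1.} The graph $(N_x,N_y)$ with edges $E(N_x,N_y)$ controls distance-$1$ pairs. The hypothesis of Case 1 — some edge of $E(X,Y)$ not adjacent to $xy$ — should be used to show that $N_x$ and $N_y$ cannot be too "disconnected" from each other: a vertex of $N_x$ with no neighbor in $N_y$, together with such a non-adjacent crossing edge, tends to contradict minimality of $\alpha_{xy}$ among edges of $E(X,Y)$ or produces a crossing edge with fewer common neighbors. Quantifying this, I expect to show $|S_1(A')\cap N_y|\ge |A'|-c$ for all $A'\subseteq N_x$ with $c$ of order $\alpha_{xy}$, and then Corollary \ref{Hall} (or König, Theorem \ref{Konig}, bounding a minimum vertex cover of $E(N_x,N_y)$) gives $|M_1|\ge |N_y|-c \approx d_y-2\alpha_{xy}-1$ or similar.

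\textbf{Combining.} Adding the two bounds yields $|M_1|+|M_2|\ge 2d_y-3\alpha_{xy}-3$; I would track constants so that the worst case of both estimates lands exactly on the claimed right-hand side, possibly using that a vertex cannot be "bad" for both $M_1$ and $M_2$ simultaneously to shave off overcounting of the $\alpha_{xy}$ terms (this is where the precise "$3\alpha_{xy}$" rather than "$4\alpha_{xy}$" should come from). The main obstacle I anticipate is the $|M_1|$ estimate: extracting a clean Hall-type deficiency bound from the purely combinatorial Case 1 hypothesis, and making sure the definition of $xy$ as an edge of $E(X,Y)$ minimizing $\alpha_{xy}$ is genuinely exploited, rather than just the local structure around $x$ and $y$. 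The $|M_2|$ estimate is more mechanical once the transport plan from the lemma is plugged in, though one must be careful that unmatched vertices really are forced to distance $3$ and not some smaller value that would weaken the curvature inequality.
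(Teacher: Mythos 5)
Your decomposition into two \emph{separate} lower bounds, one for $|M_1|$ and one for $|M_2|$, is not how this claim works and cannot be repaired. The paper does not (and cannot) prove an individual bound of the strength $|M_1|\gtrsim d_y-2\alpha_{xy}-1$: nothing in the hypotheses forces any edges between $N_x$ and $N_y$, and indeed $|M_1|=0$ is compatible with $\kappa_{LLY}(x,y)>0$ when $d_y\le 2\alpha_{xy}+2$ and $H$ has a perfect matching, in which case Claim \ref{claim1} holds with equality through $|M_2|$ alone. In particular, the Case~1 hypothesis and the minimality of $\alpha_{xy}$ — which you hoped would yield a Hall-type deficiency bound on $E(N_x,N_y)$ — are not used in the proof of this claim at all; they enter only later (Claim \ref{claim2} and the injections $f_1,f_2,f_{uv}$). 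The actual mechanism couples the two deficiencies $c_1:=|N_y|-|M_1|$ and $c_2:=|N_y|-|M_2|$ inside a \emph{single} Wasserstein estimate: writing $W(\mu_x^p,\mu_y^p)=m_1+m_2+m_3$ with $m_i$ the mass moved distance $\ge i$ under a simple optimal plan with $\pi(x,y)=p-\frac{1-p}{d_y}$, one gets $m_1\ge p-\frac{1-p}{d_y}+\frac{|N_y|(1-p)}{d_y}$, and Corollary \ref{Hall} produces deficient subsets $A_1,A_2\subseteq N_y$ forcing $m_2\ge \frac{c_1(1-p)}{d_y}$ and $m_3\ge \frac{c_2(1-p)}{d_y}$; then $W<1$ gives $c_1+c_2\le \alpha_{xy}+1$, which is exactly where the coefficient $3\alpha_{xy}$ (rather than the $4\alpha_{xy}$ your two separate bounds would produce) comes from. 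The ``shaving off overcounting'' you flagged as an open issue is precisely this missing idea.

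There is also a directional error in your curvature step: transporting mass along $xy$, fixing $A$, moving $M_2$-matched mass at cost $2$ and the leftover at cost $3$ constructs a specific plan and hence only \emph{upper}-bounds $W(\mu_x^p,\mu_y^p)$; combined with $\kappa_{LLY}(x,y)>0$, i.e.\ $W<1$, this yields no constraint on $|M_2|$ whatsoever. What is needed is a \emph{lower} bound on $W$, certifying that if $|M_1|$ or $|M_2|$ is small then a definite amount of mass must travel distance $\ge 2$ resp.\ $\ge 3$ under the optimal plan — this is why the paper takes a simple optimal plan and invokes Hall/K\"onig on deficient subsets of $N_y$, rather than exhibiting a transport plan. (The side assertion that $\kappa_{LLY}(x,y)\ge \frac{1}{d_x}$ is also unjustified for $d_x\neq d_y$ and unnecessary: linearity of $W$ in $p$ makes $W<1$ equivalent to positive curvature, which is all that is used.)
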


\begin{proof}
    For $\frac{1}{2}< p<1$, let $\pi$ be a simple optimal transport plan from $\mu_x^p$ to $\mu_y^p$ such that $$\pi(x,y)=p-\frac{1-p}{d_y}.$$
    For $i\in \{1,2,3\}$, denote by $m_i$ the total mass transported with distance at least $i$ under the transport plan $\pi$; that is,
    $$m_i:=\sum_{d(u,v)\ge i}\pi (u,v).$$
    Then the Wasserstein distance between $\mu_x^p$ and $\mu_y^p$ satisfies 
    \begin{align*}
        W(\mu_x^p,\mu_y^p)=m_1+m_2+m_3.
    \end{align*}
    By the selection of $\pi$, we have 
    \begin{align}\label{m1}
        m_1\ge \pi(x,y)+\sum_{u\in N_y}\mu_y^p(u)
        =p-\frac{1-p}{d_y}+\frac{|N_y|(1-p)}{d_y}.
    \end{align}

    Take $c_1:=|N_y|-|M_1|$. By Corollary \ref{Hall} and the definition of $M_1$, there is a subset $A_1\subset N_y$ such that $|S_1(A_1)\cap N_x|\le |A_1|-c_1$. Since $\pi$ is simple, all mass transported to $A_1$ with distance $1$ comes from $S_1(A_1)\cap N_x$. Thus,
    \begin{align}\notag
        m_2&\ge \sum_{u\in A_1}\mu_y^p(u)- \sum_{u\in S_1(A_1)\cap N_x}\mu_x^p(u)
        \\ \notag &=\frac{|A_1|(1-p)}{d_y}- \frac{|S_1(A_1)\cap N_x|(1-p)}{d_x}
        \\ \notag &\ge \frac{|A_1|(1-p)}{d_y}- \frac{(|A_1|-c_1)(1-p)}{d_x}
        \\ \label{m2} &\ge \frac{c_1(1-p)}{d_y}.
    \end{align}

    Similarly, take $c_2:=|N_y|-|M_2|$. By Corollary \ref{Hall} and the construction of the graph $H$, there is a subset $A_2\subset N_y$ such that there are at most $|A_2|-c_2$ vertices $v$ in $N_x$ with $d(v,A_2)\le 2$. By the selection of $\pi$, all mass transported to $A_2$ with distance at most $2$ comes from $N_x$. Thus,
    \begin{align}\notag 
        m_3 &\ge \sum_{u\in A_2}\mu_y^p(u)- \frac{(|A_2|-c_2)(1-p)}{d_x}
        \\ \notag &= \frac{|A_2|(1-p)}{d_y}- \frac{(|A_2|-c_2)(1-p)}{d_x}
        \\ \label{m3} &\ge \frac{c_2(1-p)}{d_y}.
    \end{align}

    Since $W(\mu_x^p,\mu_y^p)$ is linear for $p\in \left[\frac{1}{1+d_x},1\right]$, the condition that $\kappa_{LLY}(x,y)>0$ implies $W(\mu_x^p,\mu_y^p)<1$. Now, combining inequalities \eqref{m1}, \eqref{m2} and \eqref{m3}, we have
    $$d_y>|N_y|+c_1+c_2-1.$$
    The desired result follows by the definition of $c_1,c_2$ and the fact that $N_y=d_y-\alpha_{xy}-1$.
\end{proof}

Let us construct a map $f_1$ from $M_1$ to $E(X,Y)$ as follows. For an edge $uv$ in $M_1$ with $u\in N_x$ and $v\in N_y$, set
\begin{center}
$f_1(uv)=\begin{cases}
xu, &{\rm if}\ u\in Y;\\
vy, &{\rm if}\ u\in X,v\in X;\\
uv, &{\rm otherwise}.
\end{cases}$
\end{center}
Since all the edges in $M_1$ are pairwise non-adjacent, we can directly see that $f_1$ is injective.

Similarly, let us construct an injection $f_2$ from $M_2$ to $E(X,Y)$ as follows. For each $uv\in M_2$ with $u\in N_x$ and $v\in N_y$, by the construction of $H$, the distance between $u$ and $v$ in $G$ must be one or two. If the distance between $u$ and $v$ is two in $G$, let $w$ be a common neighbor of them. Set
\begin{center}
$f_2(uv)=\begin{cases}
xu, &{\rm if}\ u\in Y;\\
vy, &{\rm if}\ u\in X,v\in X;\\
uv, &{\rm if}\ u\in X,v\in Y,uv\in E.\\
uw, &{\rm if}\ u\in X,v\in Y,d(u,v)=2,w\in Y;\\
wv, &{\rm if}\ u\in X,v\in Y,d(u,v)=2,w\in X.
\end{cases}$
\end{center}

For any edge $uv\in E(X,Y)$ with $u\in X$ and $v\in Y$, set $A_{uv}:=S_1(u)\cap S_1(v)$. Let $f_{uv}$ be an injection from $A_{uv}$ to $E(X,Y)$ defined as follows. For any $w\in A_{uv}$, if $w\in X$, take $f_{uv}(w)=wv$. Otherwise, take $f_{uv}(w)=uw$. By the definition, we have $f_1(M_1)\cap f_{xy}(A_{xy})=\emptyset$ and $f_2(M_2)\cap f_{xy}(A_{xy})=\emptyset$.

If $|M_2|=d_y-\alpha_{xy}-1$, then we have
\begin{align}\notag
    |E(X,Y)|&\ge |f_2(M_2)\cup f_{xy}(A_{xy})\cup \{xy\}|
    \\ \notag &=|f_2(M_2)|+|f_{xy}(A_{xy})|+1
    \\ \notag &=|M_2|+|A_{xy}|+1=d_y\ge \delta,
\end{align}
which is contradictory to the assumption that $|E(X,Y)|=k'\le \delta-1$. Now, we assume that $|M_2| \le d_y-\alpha_{xy}-2$. Then, Claim \ref{claim1} shows that
$|M_1|\ge d_y-2\alpha_{xy}-1$.

\begin{claim}\label{claim2}
    Let $uv$ be an edge in $f_1(M_1)$ with $u\in X$ and $v\in Y$. Then $u=x$ or $v=y$.
\end{claim}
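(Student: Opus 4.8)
The plan is to trace through the definition of $f_1$ and use the fact that $M_1$ is a matching in $E(N_x, N_y)$, where $N_x = S_1(x)\setminus(S_1(y)\cup\{y\})$ and $N_y = S_1(y)\setminus(S_1(x)\cup\{x\})$. Fix an edge $uv \in M_1$ with $u \in N_x$, $v \in N_y$, and let $e = f_1(uv)$; write $e$ as having endpoints in $X$ and $Y$. There are three cases in the definition of $f_1$ to check, and in each one I will verify that one endpoint of $e$ is $x$ or one endpoint is $y$.

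First, if $u \in Y$, then $f_1(uv) = xu$, and since $u \in N_x \subseteq S_1(x)$ this is a genuine edge with $x$ as one endpoint and $u \in Y$ as the other; so $e$ has $x$ as an endpoint and we are done. Second, if $u \in X$ and $v \in X$, then $f_1(uv) = vy$; since $v \in N_y \subseteq S_1(y)$ this is an edge with $y$ as an endpoint and $v \in X$ as the other, so $e$ has $y$ as an endpoint. Third, in the remaining case $f_1(uv) = uv$ itself, and the remaining case is exactly $u \in X$ and $v \in Y$ (the case $u\in X, v\in X$ having been handled, and $u\in Y$ having been handled); but then $uv$ is an edge of $E(N_x,N_y)\subseteq E$ with $u \in N_x \subseteq S_1(x)$ and $v \in N_y \subseteq S_1(y)$, so again either $u = x$ — impossible since $x \notin N_x$ — so I should instead observe directly: here the edge $uv$ has $u\in N_x$, which means $u$ is adjacent to $x$; if $u\ne x$ and $v\ne y$ I need another argument. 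The honest resolution is that in this third branch $u\in N_x$ forces $u$ adjacent to $x$ and $u\ne x$, $v\in N_y$ forces $v$ adjacent to $y$ and $v\ne y$, so the conclusion "$u=x$ or $v=y$" is literally false for this branch unless the branch never occurs with $u\in X, v\in Y$. So the key point I will actually establish is: if $u\in N_x$ and $v\in N_y$ with $u\in X$ and $v\in Y$, then — since $uv$ joins $X$ and $Y$, $u$ is adjacent to $x\in X$, and $v$ is adjacent to $y\in Y$ — the edges $xu$ and $vy$ lie inside $X$ and inside $Y$ respectively, while $uv \in E(X,Y)$; and the statement of the claim concerns edges of $f_1(M_1)$, so I must recheck which branch produces an $E(X,Y)$-edge. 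Only the first two branches produce edges meeting $x$ or $y$, and the third branch ("otherwise", i.e. $u\in X, v\in Y$) produces $uv\in E(X,Y)$ which need not meet $x$ or $y$ — so the claim as I've read it must be using the hypothesis $|M_1|\ge d_y - 2\alpha_{xy}-1$ together with a counting/pigeonhole argument, not a branch-by-branch check.

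Accordingly, the real plan: suppose for contradiction that some $uv\in M_1$ has $f_1(uv) = uv$ with $u\in X\setminus\{x\}$, $v\in Y\setminus\{y\}$. Then $u$ is a common neighbor-type vertex — $u\in S_1(x)$, $u\ne x$, $u\notin S_1(y)$, $u\in X$ — and similarly for $v$. I will count the edges of $E(X,Y)$ that are forced to exist: the image $f_1(M_1)$ together with $f_{xy}(A_{xy})$ and the edge $xy$, all of which are pairwise distinct by the disjointness already noted ($f_1(M_1)\cap f_{xy}(A_{xy}) = \emptyset$). If additionally such an "offending" edge $uv$ exists, I can exhibit one more edge of $E(X,Y)$ distinct from all of these — for instance one of $xu$ or $vy$, which lies in a component but... no: $xu\subseteq X$, not in $E(X,Y)$. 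So instead the contradiction must come from a different direction: count $|E(X,Y)| \ge |f_1(M_1)| + |A_{xy}| + 1$ when $f_1$ avoids the offending configuration, get $\ge (d_y - 2\alpha_{xy}-1) + \alpha_{xy} + 1 = d_y - \alpha_{xy}$, which is not yet $\ge\delta$; hence one needs Claim 2 precisely to improve the count, meaning the branch structure of $f_1$ must guarantee that whenever $f_1(uv)=uv$ fails to touch $\{x,y\}$ we instead could have routed it through $x$ or $y$ and stayed injective. The cleanest writeup: show that the "otherwise" branch of $f_1$, given $u\in N_x\subseteq X$ and the matching structure, actually never applies — because if $u\in N_x$ and $u\in X$ and $v\in N_y$ and $v\in Y$, then... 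I will recheck the branch conditions and conclude that "otherwise" forces $u\in Y$ or ($u\in X$ and $v\in X$) to have failed, leaving only $u\in X, v\in Y$, in which case the edge $uv$ does join $X$ and $Y$; so the claim must be asserting that this happens only when $u = x$ or $v = y$, which cannot be since $x\notin N_x$, $y\notin N_y$.

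Given this tension, I expect the main obstacle is simply parsing the intended scope of $f_1$ correctly and realizing that Claim 2 is proved by a short case analysis on the definition of $f_1$ after the author has (implicitly or in the surrounding text) restricted attention to the sub-case where the "otherwise" branch is vacuous, or where $u\in N_x$ additionally satisfies $u\in Y$. The safe plan I will follow in the writeup: take $uv\in M_1$, let $f_1(uv)$ have endpoints $a\in X$, $b\in Y$; examine the three defining branches; in branches one and two conclude $a = x$ or $b = y$ immediately as above; and in the third branch, use that $M_1\subseteq E(N_x,N_y)$ with $N_x\subseteq S_1(x)$, $N_y\subseteq S_1(y)$, together with $x,y$ lying in $X,Y$ respectively and $|E(X,Y)| = k' \le \delta - 1 < d_y$, to derive that this branch cannot contribute an edge missing both $x$ and $y$ without producing $\delta$ distinct edges in $E(X,Y)$ — contradicting $k'\le\delta-1$ — thereby forcing $a=x$ or $b=y$. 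The hard part will be making that last counting step airtight: I will need to assemble $f_1(M_1)$, $f_{xy}(A_{xy})$, $\{xy\}$, and the putative offending edge into a set of at least $|M_1| + |A_{xy}| + 2 \ge (d_y - 2\alpha_{xy}-1) + \alpha_{xy} + 2 = d_y - \alpha_{xy} + 1$ distinct elements of $E(X,Y)$ and then argue, using $|M_1|\ge d_y-2\alpha_{xy}-1$ is not alone enough, that in fact the offending edge lets us re-route to get $d_y$ distinct edges; so the crux is exhibiting enough disjointness, and that is where I will spend the real work.
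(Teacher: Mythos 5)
There is a genuine gap: you correctly diagnose that the only problematic branch of $f_1$ is the ``otherwise'' branch (an edge $uv\in M_1$ with $u\in N_x\cap X$, $v\in N_y\cap Y$, $u\ne x$, $v\ne y$), and you correctly guess that the resolution is a counting argument in $E(X,Y)$ against $k'\le\delta-1$, but you never supply the one idea that makes the count work. Your tally $|f_1(M_1)|+|f_{xy}(A_{xy})|+|\{xy\}|$ gives only $|M_1|+\alpha_{xy}+1\ge d_y-\alpha_{xy}$, which, as you note, falls short of $\delta$; and your proposed ``one more edge'' does not help, since the offending edge $uv$ is already an element of $f_1(M_1)$ (so your $+2$ is a double count). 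You then defer the real work (``the crux is exhibiting enough disjointness, and that is where I will spend the real work''), so the argument is not completed.

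The missing ingredient in the paper's proof is the edge set attached to the offending edge itself: consider $A_{uv}=S_1(u)\cap S_1(v)$ and its injection $f_{uv}$ into $E(X,Y)$. Because $u\in N_x$ and $v\in N_y$, neither $x$ nor $y$ is a common neighbor of $u$ and $v$, so every edge of $f_{uv}(A_{uv})$ avoids both $x$ and $y$ and hence is disjoint from $f_{xy}(A_{xy})\cup\{xy\}$; and because $M_1$ is a matching, no other edge of $f_1(M_1)$ meets $u$ or $v$, so $f_{uv}(A_{uv})\cap f_1(M_1)=\emptyset$ as well. Now the extremal choice of $xy$ (it minimizes the number of common neighbors over all edges of $E(X,Y)$) gives $\alpha_{uv}\ge\alpha_{xy}$, and the count becomes $|E(X,Y)|\ge |M_1|+\alpha_{xy}+1+\alpha_{uv}\ge (d_y-2\alpha_{xy}-1)+\alpha_{xy}+1+\alpha_{xy}=d_y\ge\delta$, contradicting $k'\le\delta-1$. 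Without invoking $A_{uv}$ and the minimality of $\alpha_{xy}$ — neither of which appears in your plan — the contradiction cannot be reached, so the proposal as written does not prove the claim.
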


\begin{proof}
    Suppose that $u\ne x$ and $v\ne y$. By the definition of $f_1$, we have $u\in N_x$ and $v\in N_y$. Thus, neither $x$ nor $y$ is a common neighbor of $u$ and $v$. Since all the edges in $M_1$ are pairwise non-adjacent, we have 
    $$f_{uv}(A_{uv})\cap \left( f_1(M_1)\cup f_{xy}(A_{xy}) \right)= \emptyset.$$
    By the selection of $xy$, we have $\alpha_{uv}\ge \alpha_{xy}$.
    Since $|M_1|\ge d_y-2\alpha_{xy}-1$, we deduce that
    \begin{align}\notag
        |E(X,Y)|&\ge |f_1(M_1)\cup f_{xy}(A_{xy})\cup \{xy\} \cup f_{uv}(A_{uv})|
        \\ \notag &=|f_1(M_1)|+| f_{xy}(A_{xy})|+1+| f_{uv}(A_{uv})|
        \\ \notag &= |M_1|+|A_{xy}|+1+|A_{uv}|
        \\ \notag &\ge d_y-2\alpha_{xy}-1 +\alpha_{xy}+1+\alpha_{uv}\ge d_y\ge\delta,
    \end{align}
    which is contradictory to the assumption that $|E(X,Y)|=k'\le \delta-1$.
\end{proof}

By the hypothesis of Case $1$, there is an edge $u_0v_0$ in $E(X,Y)$ which is not adjacent to $xy$. Let $w$ be a common neighbor of $u_0$ and $v_0$ such that $w\ne x$ and $w\ne y$. Then, $f_{u_0v_0}(w)\notin f_{xy}(A_{xy})$. In addition, Claim \ref{claim2} implies that $f_{u_0v_0}(w)\notin f_1(M_1)$. Thus, if $x$ and $y$ are not common neighbors of $u_0$ and $v_0$ at the same time, then
$$|f_{u_0v_0}(A_{u_0v_0})\cap \left( f_1(M_1)\cup f_{xy}(A_{xy}) \right)|\le 1.$$
Since $\alpha_{uv}\ge \alpha_{xy}$ and $|M_1|\ge d_y-2\alpha_{xy}-1$, we derive
\begin{align}\notag
        |E(X,Y)|&\ge |f_1(M_1)\cup f_{xy}(A_{xy})\cup \{xy\} \cup f_{u_0v_0}(A_{u_0v_0})\cup \{u_0v_0\}|
        \\ \notag &\ge|f_1(M_1)|+| f_{xy}(A_{xy})|+1+| f_{u_0v}(A_{uv})|+1-1
        \\ \notag &= |M_1|+|A_{xy}|+1+|A_{uv}|
        \\ \notag &\ge d_y-2\alpha_{xy}-1 +\alpha_{xy}+1+\alpha_{uv}\ge d_y\ge\delta,
\end{align}
which is a contradiction.

Now, we may assume that both $x$ and $y$ are common neighbors of $u_0$ and $v_0$. By the definition of $f_2$, we have $u_0v_0\notin f_2(M_2)$. If $|M_2| \ge d_y-\alpha_{xy}-2$, then
\begin{align}\notag
    |E(X,Y)|&\ge |f_2(M_2)\cup f_{xy}(A_{xy})\cup \{xy\}\cup \{u_0v_0\}|
    \\ \notag &=|M_2|+|A_{xy}|+2=d_y\ge \delta.
\end{align}
Thus, we have $|M_2| \le d_y-\alpha_{xy}-3$. Claim \ref{claim1} then shows that $|M_1|\ge d_y-2\alpha_{xy}$. Note that
$$|f_{u_0v_0}(A_{u_0v_0})\cap \left( f_1(M_1)\cup f_{xy}(A_{xy}) \right)|\le 2.$$
We derive
\begin{align}\notag
        |E(X,Y)|&\ge |f_1(M_1)\cup f_{xy}(A_{xy})\cup \{xy\} \cup f_{u_0v_0}(A_{u_0v_0})\cup \{u_0v_0\}|
        \\ \notag &\ge|f_1(M_1)|+| f_{xy}(A_{xy})|+1+| f_{u_0v_0}(A_{u_0v_0})|+1-2
        \\ \notag &= |M_1|+|A_{xy}|+|A_{u_0v_0}|
        \ge d_y-2\alpha_{xy} +\alpha_{xy}+\alpha_{u_0v_0}\ge d_y\ge\delta,
\end{align}
which is a contradiction. We complete the proof of Case $1$.

\noindent\textbf{Case 2:} Every edge in $E(X,Y)\backslash\{xy\}$ is adjacent to $xy$.

\begin{claim}\label{claim3}
    For any edge $uv\in E(X,Y)$ with $u\in X$ and $v\in Y$, there is a vertex $w_1\in X$ such that $w_1$ is adjacent to $u$ but not adjacent to $v$. Similarly, there is a vertex $w_2\in Y$ such that $w_2$ is adjacent to $v$ but not adjacent to $u$.
\end{claim}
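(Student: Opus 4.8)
The plan is to argue by contradiction, exploiting the hypothesis of Case 2 that \emph{every} edge of $E(X,Y)$ other than $xy$ shares an endpoint with $xy$, together with the standing assumptions that $G$ has positive Lin-Lu-Yau curvature and $k'\le \delta-1$. Suppose the first assertion fails for some edge $uv\in E(X,Y)$ with $u\in X$, $v\in Y$: that is, every neighbor of $u$ inside $X$ is also a neighbor of $v$. I would first observe that, because $k'\le\delta-1$, the endpoints $u$ and $v$ cannot send too many edges across the cut: writing $d_u^X:=|S_1(u)\cap X|$ and $d_u^Y:=|S_1(u)\cap Y|$, the edge set $E(X,Y)$ contains all edges from $u$ to $Y$ and all edges from $v$ to $X$, so $d_u^Y+d_v^X-1\le k'\le\delta-1$, hence $d_u^Y+d_v^X\le\delta$; similarly for the other configurations. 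Combined with $d_u=d_u^X+d_u^Y\ge\delta$ this forces $d_u^X$ to be large, i.e.\ $u$ has many neighbors inside $X$, all of which (by the failure of the claim) lie in $S_1(v)$.

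\textbf{Deriving the contradiction via curvature.}
The next step is to turn this into a statement about $\kappa_{LLY}(u,v)>0$ using Theorem \ref{regular} when $d_u=d_v$, or more generally the transport-plan estimates developed in the previous section (the lemma on simple optimal transport plans with $\pi(x,y)=p-\frac{1-p}{d_y}$ and the mass-counting argument behind Claim \ref{claim1}). The key point: if every neighbor of $u$ in $X$ is a neighbor of $v$, then $N_u:=S_1(u)\setminus(S_1(v)\cup\{v\})$ is entirely contained in $Y$, so every vertex of $N_u$ is separated from $v$ only across the cut. Using the Case 2 hypothesis, any vertex of $N_u\subset Y$ that is adjacent to a vertex of $N_v$ would produce an edge of $E(X,Y)$ not adjacent to $xy$ unless that edge is one of the (few) edges incident to $x$ or $y$; counting these, one finds that $E(N_u,N_v)$ together with paths of length $2$ through the cut is too small to transport the mass $\frac{|N_u|(1-p)}{d_u}$ from $N_u$ to $N_v$ at cost $<1$, contradicting $W(\mu_u^p,\mu_v^p)<1$. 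The second assertion of the claim follows by the symmetric argument with the roles of $X$ and $Y$, $u$ and $v$ interchanged.

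\textbf{Main obstacle.}
I expect the delicate part to be the bookkeeping in Case 2, precisely because the hypothesis ``every cut edge is adjacent to $xy$'' constrains the global structure but interacts subtly with the matchings $M_1$, $M_2$ and the injections $f_1,f_2,f_{uv}$ already constructed: one must ensure that the few exceptional cut edges incident to $x$ or $y$ are not double-counted when we combine $f_1(M_1)$, $f_{xy}(A_{xy})$, $\{xy\}$ and $f_{uv}(A_{uv})$, as in Claim \ref{claim2}. A clean way to organize this is to prove Claim \ref{claim3} \emph{first} as a structural fact about any edge in $E(X,Y)$ under the Case 2 hypothesis (independent of $M_1,M_2$), and only afterward feed it back into the cardinality estimate $|E(X,Y)|\ge\delta$. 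The inequality $d_u^Y+d_v^X\le\delta$ is what drives everything: it says $u$ behaves, locally, almost like a vertex all of whose neighbors but one lie inside $X$, and the positivity of curvature then forces a neighbor of $u$ in $X$ outside $S_1(v)$, which is exactly the claim.
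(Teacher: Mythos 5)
There is a genuine gap: the contradiction is never actually derived. Your first paragraph is essentially one line away from a complete proof, but you abandon it for an unnecessary and unsubstantiated curvature argument. Note that under the failure hypothesis (every neighbor of $u$ in $X$ is adjacent to $v$) you have $|S_1(v)\cap X|\ge |S_1(u)\cap X|+1$, since each of the $|S_1(u)\cap X|$ neighbors of $u$ in $X$ lies in $S_1(v)$ and $u$ itself is an additional such vertex. Feeding this into your own inequality $|S_1(u)\cap Y|+|S_1(v)\cap X|-1\le k'\le\delta-1$ gives $d_u\le \delta-1<\delta\le d_u$, a contradiction, and you are done. This is exactly the paper's argument: the edge sets $E_1=\{uw:\,w\in Y,\ uw\in E\}$ and $E_2=\{wv:\,w\in S_1(u)\cap X\}$ are disjoint subsets of $E(X,Y)$ of total size $d_u\ge\delta>k'$. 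No curvature, no Case~2 hypothesis, and none of the machinery $M_1,M_2,f_1,f_2$ is needed; the claim is a pure cut-counting fact resting only on $k'\le\delta-1$.

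The curvature completion you sketch instead does not work as stated. You claim that $N_u:=S_1(u)\setminus(S_1(v)\cup\{v\})\subset Y$ implies its vertices are ``separated from $v$ only across the cut,'' but $v\in Y$ as well, so $N_u$ and $v$ lie on the same side; likewise, mass sent from $N_u$ to the part of $N_v$ lying in $Y$ never crosses the cut, so the cut size does not bound that transport. The decisive step---``counting these, one finds that $E(N_u,N_v)$ together with paths of length $2$ through the cut is too small to transport the mass \dots at cost $<1$''---is asserted rather than proved, and the quantities involved are never estimated. Your ``main obstacle'' paragraph also misdiagnoses the difficulty: Claim~\ref{claim3} is used as an input to the later estimates, not entangled with them, so no double-counting with $f_1(M_1)$, $f_{xy}(A_{xy})$, or $f_{uv}(A_{uv})$ arises in its proof.
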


\begin{proof}
    Assume that all the neighbors of $u$ in $X$ are adjacent to $v$. Set
    $$E_1:=\{ uw| w\in Y,uw\in E \},$$
    and
    $$E_2:=\{ wv| w\in X,uw\in E \}.$$
    Then, we have
    \begin{align}\notag
        |E(X,Y)|\ge |E_1|+|E_2|=d_u\ge\delta.
    \end{align}
    It contradicts the assumption. Similarly, neighbors of $v$ in $Y$ are not all adjacent to $u$. 
\end{proof}

Now, we divide Case $2$ into two Subcases.

\noindent\textbf{Subcase 1:} We have $|S_1(x)\cap Y|\ge 2$ and $|S_1(y)\cap X|\ge 2$.

Let $u\ne y$ be a neighbor of $x$ in $Y$. We set 
$Z:=\{ z|z\ne x, zu\in E,zx\notin E \}$. Claim \ref{claim3} implies that $Z\ne\emptyset$. By the hypothesis of Case $2$, we have $Z\subset Y$. According to Kantorovich duality, we derive
\begin{align*}
     W(\mu_x^p,\mu_u^p)&= \sup_{\substack{f: V\to \mathbb{R}\\1-\text{Lipschitz}}}\sum_{v\in V}f(v)\left(\mu_x^p(v)-\mu_u^p(v)\right)\\
     &\geq \sum_{v\in V}d(v,Z)\mu_{x}^p(v)- \sum_{v\in V}d(v,Z)\mu_{u}^p(v)\\
     &= \sum_{v\in V}d(v,Z)\mu_{x}^p(v)- p-\frac{2(1-p)}{d_u}-\frac{1-p}{d_u}\sum_{v\in A_{xu}}d(v,Z).
\end{align*}
According to Claim \ref{claim3}, $x$ has a neighbor $w$ in $X$ such that $w$ is not adjacent to $y$. It follows that $d(w,Z)=3$. Set $d_x^+:=|S_1(x)\cap Y|$, then $|S_1(x)\cap X|=d_x-d_x^+$. We deduce that
\begin{align*}
     &\sum_{v\in V}d(v,Z)\mu_{x}^p(v)\\ 
     = &2p+\frac{3(1-p)}{d_x}+\sum_{v\in S_1(x)\cap X\backslash \{ w\}} d(v,Z)\mu_{x}^p(v)+ \sum_{v\in A_{xu}} d(v,Z)\mu_{x}^p(v)+\sum_{v\in S_1(x)\cap Y\backslash A_{xu}} d(v,Z)\mu_{x}^p(v)\\
     \ge &2p+\frac{3(1-p)}{d_x}+\frac{2(d_x-d_x^+-1)(1-p)}{d_x} +\frac{1-p}{d_x}\sum_{v\in A_{xu}}d(v,Z)+ \frac{(d_x^+-\alpha_{xu})(1-p)}{d_x}.
\end{align*}
Therefore, by the definition of Lin-Lu-Yau curvature, we have
\begin{align}\notag
    \kappa_{LLY}(x,u)&=\lim_{p\to 1}\frac{1-W(\mu_x^p,\mu_u^p)}{1-p}
    \\ \notag &\le \frac{2}{d_u}-1+\frac{d_x^+ + \alpha_{xu}-1}{d_x}+\left( \frac{1}{d_u}-\frac{1}{d_x} \right) \sum_{v\in A_{xu}}d(v,Z)
    \\ \label{xu} &\le \frac{2}{\delta}-1+\frac{d_x^+ + \alpha_{xu}-1}{d_x}+\left( \frac{1}{\delta}-\frac{1}{d_x} \right) \sum_{v\in A_{xu}}d(v,Z).
\end{align}
Since $A_{xu}\cup\{u\}\subset S_1(x)\cap Y$, we have $d_x^+ \ge \alpha_{xu}+1$. Then,
$$d_x^+ + \alpha_{xu}-1-\sum_{v\in A_{xu}}d(v,Z)\ge d_x^+ - \alpha_{xu}-1\ge 0.$$
Thus, inequality \eqref{xu} shows that
\begin{align*}
   \kappa_{LLY}(x,u)\le \frac{d_x^+ + \alpha_{xu}+1}{\delta}-1\le \frac{2d_x^+}{\delta}-1.
\end{align*}
By the assumption that $\kappa_{LLY}(x,u)>0$, we derive $d_x^+>\delta/2$. By symmetry, we have $|S_1(y)\cap X|>\delta/2$. It follows that
$$|E(X,Y)|\ge |S_1(x)\cap Y|+|S_1(y)\cap X| -1>\delta-1,$$
which is contradictory to $|E(X,Y)|\le \delta-1$. We complete the proof of Subcase 1. Now, we have either $S_1(y)\cap X= \{x\}$ or $S_1(x)\cap Y= \{y\}$. By symmetry, we only need to consider the case $S_1(y)\cap X= \{x\}$.

\noindent\textbf{Subcase 2:} We have $S_1(y)\cap X= \{x\}$.

Let $Z:=X\backslash\{x \}$. Then Claim \ref{claim3} implies that $Z\ne \emptyset$. Set $d_x^+:=|S_1(x)\cap Y|$. According to Kantorovich duality, we derive
\begin{align*}
     W(\mu_y^p,\mu_x^p)&= \sup_{\substack{f: V\to \mathbb{R}\\1-\text{Lipschitz}}}\sum_{v\in V}f(v)\left(\mu_y^p(v)-\mu_x^p(v)\right)\\
     &\geq \sum_{v\in V}d(v,Z)\mu_{y}^p(v)- \sum_{v\in V}d(v,Z)\mu_{x}^p(v)\\
     &= 2p+\frac{1-p}{d_y}+\frac{2\alpha_{xy}(1-p)}{d_y} +\frac{3(d_y-\alpha_{xy}-1)(1-p)}{d_y}-p-\frac{2d_x^+(1-p)}{d_x}.
\end{align*}
It follows that
\begin{align}\label{4.5}
    0<\kappa_{LLY}(y,x)=\lim_{p\to 1}\frac{1-W(\mu_y^p,\mu_x^p)}{1-p} \le\frac{2d_x^+}{d_x} +\frac{\alpha_{xy}+2}{d_y}-2.
\end{align}
Claim \ref{claim3} implies that $\alpha_{xy}\le d_y-2$. Thus, inequality \eqref{4.5} gives
\begin{align}\label{4.6}
    d_x^+ \ge \frac{d_x+1}{2}.
\end{align}

By Claim \ref{claim3}, $x$ has at least one neighbor $u$ in $X$. Using Kantorovich duality again, we deduce that
\begin{align*}
     W(\mu_u^p,\mu_x^p)&= \sup_{\substack{f: V\to \mathbb{R}\\1-\text{Lipschitz}}}\sum_{v\in V}f(v)\left(\mu_u^p(v)-\mu_x^p(v)\right)\\
     &\geq \sum_{v\in V}d(v,Y)\mu_{u}^p(v)- \sum_{v\in V}d(v,Y)\mu_{x}^p(v)\\
     &= 2p+\frac{1-p}{d_u}+\frac{2\alpha_{xu}(1-p)}{d_u} +\frac{3(d_u-\alpha_{xu}-1)(1-p)}{d_u}-p-\frac{2(d_x-d_x^+)(1-p)}{d_x}.
\end{align*}
It follows that
\begin{align}\label{4.7}
    0<\kappa_{LLY}(u,x)=\lim_{p\to 1}\frac{1-W(\mu_u^p,\mu_x^p)}{1-p} \le \frac{\alpha_{xu}+2}{d_u}-\frac{2d_x^+}{d_x}.
\end{align}
By inequality \eqref{4.6} and the hypothesis that $|E(X,Y)|\le \delta-1$, we have
$$\alpha_{xu}\le d_x-d_x^+\le d_x^+-1\le |E(X,Y)|-1 \le \delta-2.$$
Therefore, inequality \eqref{4.7} shows that
$$0< \frac{\delta}{d_u}-\frac{2d_x^+}{d_x}\le 1-\frac{2d_x^+}{d_x}.$$
This is contradictory to inequality \eqref{4.6}. We complete the proof of Theorem \ref{edge-connectivity}.
\end{proof}

\section{Large connectivity implies large curvature}
In this section, we give a sharp lower bound of Lin-Lu-Yau curvature on each edge. This result shows that large connectivity can guarantee large Lin-Lu-Yau curvature on each edge, which complements the conclusion of Section \ref{Positive Lin-Lu-Yau curvature implies large connectivity}. Before presenting our proof, we need some lemmas to estimate the diameter and Wasserstein distance.

\begin{lemma}\label{diameter}
    Let $G$ be a graph on $n$ vertices. If the minimum degree $\delta(G)$ of $G$ satisfies 
    $$\delta(G)\ge \frac{n-1}{2},$$
    then the diameter of $G$ is at most two.
\end{lemma}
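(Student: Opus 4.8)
The plan is to use a standard neighborhood-counting argument. Since the diameter is the maximum of $d(u,v)$ over all pairs of distinct vertices, it suffices to show that every pair of distinct vertices is at distance at most two. If $u$ and $v$ are adjacent this is immediate, so I would assume $u$ and $v$ are non-adjacent and produce a common neighbor.

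First I would observe that, because $u$ and $v$ are non-adjacent, neither of them belongs to $S_1(u)$ or to $S_1(v)$; hence both $S_1(u)$ and $S_1(v)$ are subsets of $V\setminus\{u,v\}$, a set of cardinality $n-2$. Then by the inclusion--exclusion principle,
\[
|S_1(u)\cap S_1(v)| \;=\; |S_1(u)|+|S_1(v)|-|S_1(u)\cup S_1(v)| \;\ge\; 2\delta(G)-(n-2).
\]
Invoking the hypothesis $\delta(G)\ge\frac{n-1}{2}$, the right-hand side is at least $(n-1)-(n-2)=1$, so $S_1(u)\cap S_1(v)\ne\emptyset$. Any vertex $w$ in this intersection is a common neighbor of $u$ and $v$, giving $d(u,v)\le 2$.

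Finally I would dispose of the degenerate cases: if $n\le 1$ the diameter is $0$ by convention, and if $u=v$ there is nothing to check; the argument above also shows in passing that $G$ is connected, so the diameter is well defined. There is essentially no obstacle in this proof; the only point requiring care is that the two neighborhoods must be viewed inside $V\setminus\{u,v\}$ rather than $V\setminus\{u\}$ — this is precisely where non-adjacency of $u$ and $v$ enters, and it is also the place where the threshold $\frac{n-1}{2}$ is exactly what is needed.
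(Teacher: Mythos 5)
Your proposal is correct and is essentially the paper's argument: the same neighborhood-counting bound $2\delta(G)\ge n-1>n-2\ge|S_1(u)\cup S_1(v)|$, merely phrased directly (producing a common neighbor for every non-adjacent pair) rather than by contradiction from a pair with no common neighbor.
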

\begin{proof}
    If the diameter of $G$ is at least three, then there exist two non-adjacent vertices $x$ and $y$ with no common neighbors. It follows that
    \begin{align*}
        d_x+d_y=|S_1(x)\cup S_1(y)|\le n-2.
    \end{align*}
    Nevertheless, we have
    \begin{align*}
        d_x+d_y\ge 2\delta(G)\ge n-1,
    \end{align*}
    which is a contradiction.
\end{proof}

\begin{lemma}\label{pi}
    Let $G=(V,E)$ be a graph of diameter two. Let $\mu_1$ and $\mu_2$ be two probability measures on $G$. Suppose that there is a map $\pi_0: V\times V\to [0,1]$ satisfying
    $$\mu_1(x)\ge \sum_{y\in V}\pi_0(x,y),\quad\forall x\in V,$$ 
    and
    $$\mu_2(y)\ge\sum_{x\in V}\pi_0(x,y),\quad\forall y\in V.$$
    Then
    the Wasserstein distance $W(\mu_1, \mu_2)$ between $\mu_1$ and $\mu_2$ satisfies
    \begin{align}\notag
        W(\mu_1,\mu_2)\le 2\left( 1- \sum_{x,y\in V}\pi_0(x,y) \right)+\sum_{x,y\in V}d(x,y)\pi_0(x,y).
    \end{align}
\end{lemma}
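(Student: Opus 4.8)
The plan is to complete the ``partial transport plan'' $\pi_0$ to a genuine transport plan from $\mu_1$ to $\mu_2$ and to control the extra cost using the diameter-two hypothesis. First I would write $m:=\sum_{x,y\in V}\pi_0(x,y)$ and introduce the residual measures $\nu_1(x):=\mu_1(x)-\sum_{y\in V}\pi_0(x,y)$ and $\nu_2(y):=\mu_2(y)-\sum_{x\in V}\pi_0(x,y)$. The two hypotheses on $\pi_0$ say precisely that $\nu_1,\nu_2\ge 0$, and summing each over $V$ gives $\sum_{x\in V}\nu_1(x)=\sum_{y\in V}\nu_2(y)=1-m$. If $m=1$ then $\nu_1$ and $\nu_2$ vanish identically, $\pi_0$ is itself a transport plan, and the inequality is immediate; so I may assume $m<1$.

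Next I would exhibit any transport plan $\pi_1$ from $\nu_1$ to $\nu_2$; the product plan $\pi_1(x,y):=\nu_1(x)\nu_2(y)/(1-m)$ works, since $\sum_{y}\pi_1(x,y)=\nu_1(x)$ and $\sum_{x}\pi_1(x,y)=\nu_2(y)$. Because $G$ has diameter two we have $d(x,y)\le 2$ whenever $x\ne y$ and $d(x,x)=0$, so
\begin{align*}
\sum_{x,y\in V}d(x,y)\pi_1(x,y)&\le 2\sum_{\substack{x,y\in V\\ x\ne y}}\pi_1(x,y)\le 2\sum_{x,y\in V}\pi_1(x,y)=2(1-m).
\end{align*}
Now set $\pi:=\pi_0+\pi_1$. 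Then $\sum_{y}\pi(x,y)=\sum_{y}\pi_0(x,y)+\nu_1(x)=\mu_1(x)$ and similarly $\sum_{x}\pi(x,y)=\mu_2(y)$, and $\pi$ takes values in $[0,1]$ because each marginal is a probability measure; hence $\pi$ is an admissible transport plan from $\mu_1$ to $\mu_2$.

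Finally, the definition of the Wasserstein distance as an infimum over transport plans gives
\begin{align*}
W(\mu_1,\mu_2)&\le \sum_{x,y\in V}d(x,y)\pi(x,y)\\
&=\sum_{x,y\in V}d(x,y)\pi_0(x,y)+\sum_{x,y\in V}d(x,y)\pi_1(x,y)\\
&\le \sum_{x,y\in V}d(x,y)\pi_0(x,y)+2(1-m),
\end{align*}
which is exactly the claimed bound. I do not expect a genuine obstacle here: the only steps needing a line of justification are the non-negativity and mass balance of the residuals $\nu_1,\nu_2$ and the check that $\pi$ is an admissible transport plan. If one wanted a sharper constant one could instead let $\min\{\nu_1(x),\nu_2(x)\}$ of the residual mass stay at $x$ before transporting the rest, but this refinement is not needed for the stated inequality.
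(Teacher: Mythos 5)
Your proposal is correct and follows essentially the same route as the paper: form the residual measures, complete $\pi_0$ to a full transport plan by coupling the residuals (the paper uses an arbitrary plan, you use the explicit product plan), and bound the extra cost by $2(1-m)$ via the diameter-two hypothesis. The only cosmetic differences are your explicit product coupling and the separate treatment of $m=1$, neither of which changes the argument.
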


\begin{proof}
    Let us consider the following two measures on $G$:
    \begin{align*}
        \mu'_1(x):=\mu_1(x)-\sum_{y\in V}\pi_0(x,y) \quad\forall x\in V,
    \end{align*}
    and
    \begin{align*}
        \mu'_2(y):=\mu_2(y)-\sum_{x\in V}\pi_0(x,y) \quad\forall y\in V.
    \end{align*}
    By the assumption, $\mu'_1$ and $\mu'_2$ are both non-negative and have the same total measure. Since $G$ is connected, there is a transport plan $\pi_1$ between $\mu'_1$ and $\mu'_2$. Set $\pi:=\pi_0+\pi_1$, then $\pi$ is a transport plan between $\mu_1$ and $\mu_2$. Thus,
    \begin{align}\notag
        W(\mu_1,\mu_2)&\le \sum_{x,y\in V}d(x,y)\pi(x,y)
        \\ \notag &=\sum_{x,y\in V}d(x,y)\pi_1(x,y)+\sum_{x,y\in V}d(x,y)\pi_0(x,y)
        \\ \notag &\le 2\sum_{x,y\in V}\pi_1(x,y)+\sum_{x,y\in V}d(x,y)\pi_0(x,y)
        \\ \notag &=2\left( 1- \sum_{x,y\in V}\pi_0(x,y) \right)+\sum_{x,y\in V}d(x,y)\pi_0(x,y).
    \end{align}
    We complete the proof.
\end{proof}

Now, we are prepared to prove Theorem \ref{LLY lower bound}.

\begin{proof}[Proof of Theorem \ref{LLY lower bound}]
    Let $G=(V,E)$ be a graph on $n$ vertices with connectivity $k(G)\ge (n-1)/2$. Let $x$ and $y$ be any two adjacent vertices in $G$ with $d_x\ge d_y$.
    If $G$ is the complete graph, we have $k(G)=n-1$ and $\kappa_{LLY}(x,y)=n/(n-1)$. Then the desired inequality holds.
    
    Now, suppose that $G$ is not complete.  Let us set 
    \begin{align}\notag
        A:=S_1(x)\cap S_1(y),\ N_x:=S_1(x)\setminus (A\cup\{y\}),\ N_y:=S_1(y)\setminus (A\cup\{x\}),
    \end{align}
    and 
    \begin{align}\notag
        B:=V\setminus (S_1(x)\cup S_1(y)).
    \end{align}
    It directly follows that 
    \begin{align}\label{5}
        |A|+|N_x|+|N_y|+|B|+2=n.
    \end{align}
    Using the inequality \eqref{whi}, we have
    \begin{align}\label{2}
        |N_y|+|A|+1=d_y\ge \delta(G)\ge k(G).
    \end{align}
    Let $M$ be a matching between $N_x$ and $N_y$ of maximum size. 
    
    \begin{claim}\label{claim}
        We have $|A|+|B|+|M|+1\ge k(G)$.
    \end{claim}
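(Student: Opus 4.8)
The plan is to turn the claim into a statement about vertex cuts: since $G$ is not complete and $k(G)$-connected, it suffices to produce a set $S\subseteq V$ with $|S|\le |A|+|B|+|M|+1$ such that $G-S$ is disconnected, for then $k(G)\le|S|$. The key tool is K\"onig's theorem (Theorem \ref{Konig}), applied to the bipartite graph with parts $N_x,N_y$ whose edges are the edges of $G$ joining $N_x$ to $N_y$: it provides a vertex cover $C\subseteq N_x\cup N_y$ of those edges with $|C|=|M|$.

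Before choosing $S$, I would record the incidence facts I need. The sets $A,B,N_x,N_y,\{x\},\{y\}$ are pairwise disjoint, directly from the definitions, so $A\cup B\cup C\cup\{x\}$ has size exactly $|A|+|B|+|M|+1$, and likewise with $x$ replaced by $y$. Moreover $N_x\cap S_1(y)=\emptyset$ and $N_y\cap S_1(x)=\emptyset$, and since $C$ covers every $N_x$–$N_y$ edge of $G$, no vertex of $N_x\setminus C$ is adjacent to any vertex of $N_y\setminus C$; hence in $G$ every neighbour of a vertex of $N_x\setminus C$ lies in $(N_x\setminus C)\cup A\cup B\cup C\cup\{x\}$, and symmetrically for $N_y\setminus C$.

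With this in hand I would split into cases. If $N_x\setminus C\ne\emptyset$, take $S:=A\cup B\cup C\cup\{x\}$; the vertex set of $G-S$ is $\{y\}\cup(N_x\setminus C)\cup(N_y\setminus C)$, and by the incidence facts there is no edge between the nonempty set $N_x\setminus C$ and the set $\{y\}\cup(N_y\setminus C)$ which contains $y$, so $G-S$ is disconnected and $k(G)\le|S|=|A|+|B|+|M|+1$. The case $N_y\setminus C\ne\emptyset$ is symmetric with $S:=A\cup B\cup C\cup\{y\}$. The remaining case, $N_x\setminus C=\emptyset$ and $N_y\setminus C=\emptyset$, means $N_x\cup N_y\subseteq C$, whence $|N_x|+|N_y|\le|C|=|M|\le\min\{|N_x|,|N_y|\}$; this forces $N_x=N_y=\emptyset$ and $M=\emptyset$, and then $|A|+|B|=n-2$ by \eqref{5}, so $k(G)\le n-2=|A|+|B|\le|A|+|B|+|M|+1$ because $G$ is not complete. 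I expect this degenerate case — where the maximum matching saturates both $N_x$ and $N_y$ — to be the only genuine obstacle; the rest is bookkeeping to check that $|S|$ is exactly $|A|+|B|+|M|+1$ and that deleting $S$ really disconnects $G$ rather than merely separating $x$ from $y$.
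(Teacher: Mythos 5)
Your proof is correct and follows essentially the same route as the paper's: König's theorem produces a cover $C$ of the $N_x$--$N_y$ edges with $|C|=|M|$, and deleting $A\cup B\cup C$ together with $x$ or $y$ (according to which of $N_x\setminus C$, $N_y\setminus C$ is nonempty) gives the disconnecting set, with the degenerate case handled by a trivial bound on $k(G)$. The only difference is cosmetic: the paper treats $N_x\cup N_y=\emptyset$ and $|M|=0$ as separate preliminary cases and bounds $k(G)$ by $n-1$ there, while you apply König uniformly and invoke $k(G)\le n-2$ for non-complete $G$ in the single remaining degenerate case.
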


    \begin{proof}
    If $N_x\cup N_y=\emptyset$, then $|A|+|B|+|M|+1= n-1\ge k(G)$. Now, suppose that $N_x\cup N_y\ne \emptyset$. 
    If $|M|=0$, then there are no edges between $N_x$ and $N_y$. Thus, $G-A\cup B\cup\{ y\}$ is disconnected. By the definition of connectivity, we have $|A|+|B|+1\ge k(G)$. Now, suppose that $|M|>0$.
    According to Theorem \ref{Konig}, there is a vertex set $X$ of size $|M|$ such that there are no edges between $N_x\setminus X$ and $N_y\setminus X$.
    Since $|X|=|M|$, we have $$|(N_x\cup N_y)\setminus X|=|N_x|+|N_y|-|X|\ge |M|+|M|-|M|>0.$$
    Let us set
    \begin{center}
    $Y=\begin{cases}
    A\cup B\cup X\cup \{ x \}, &{\rm if}\ N_x\setminus X\ne \emptyset;\\
    A\cup B\cup X\cup \{ y \}, &{\rm otherwise}.
    \end{cases}$
    \end{center}
    Since there are no edges between $N_x\setminus X$ and $N_y\setminus X$, we deduce that $G-Y$ is disconnected. By the definition of connectivity, we have $|A|+|B|+|M|+1=|Y|\ge k(G)$.
    \end{proof}
    
    Combining the inequalities \eqref{5}, \eqref{2} and Claim \ref{claim}, we derive
    \begin{align}\label{6} 
        (|A|+|N_x|+|N_y|+|B|+2)-(|N_y|+|A|+1)
        -(|A|+|B|+|M|+1)%\notag\\
       % = &(|M|+|A|-|N_x|)\notag
        \le &n-2k.
    \end{align}
    Let us set $L:=|M|+|A|-|N_x|$. Then the inequality \eqref{6} is transformed into
    \begin{align}\label{L}
        L\ge 2k-n.
    \end{align}
    
    Since $\delta(G)\ge k(G)\ge (n-1)/2$, it follows by Lemma \ref{diameter} that $G$ has diameter two. For any $\frac{1}{1+d_y}\leq p<1$, let $\pi_p: V\times V\to [0,1]$ be the map defined as follows:
    \begin{center}
    $\pi_p(u,v)=\begin{cases}
    p-\frac{1-p}{d_y}, &{\rm if}\ u=x, v=y;\\
    \frac{1-p}{d_y}, &{\rm if}\ u=v=x;\\
    \frac{1-p}{d_x}, &{\rm if}\ u=v,u\in A\cup\{ y\};\\
    \frac{1-p}{d_x}, &{\rm if}\ u\in N_x,v\in N_y,uv\in M;\\
    0, &{\rm otherwise}.
    \end{cases}$
    \end{center}
Then Lemma \ref{pi} shows that 
    \begin{align}\notag
        W(\mu^p_x,\mu^p_y) &\le 2\left( 1- \sum_{u,v\in V}\pi_p(u,v) \right)+\sum_{u,v\in V}d(u,v)\pi_p(u,v)
        \\ \label{W} &= \frac{2(|N_x|-|M|)(1-p)}{d_x}+p-\frac{1-p}{d_y}+ \frac{|M|(1-p)}{d_x}.
    \end{align}
    The inequality \eqref{W} yields
    \begin{align*}\notag
        \frac{1-W(\mu^p_x,\mu^p_y)}{1-p}
        &\ge 1+\frac{|M|-2|N_x|}{d_x}+\frac{1}{d_y}
        \\ &\ge 1+\frac{|M|-2|N_x|}{d_x}+\frac{1}{d_x}=\frac{L+2}{d_x}.
    \end{align*}
    It follows by inequality \eqref{L} that
    \begin{align*}
        \kappa_{LLY}(x,y)=\lim_{p\to 1}\frac{1-W(\mu^p_x,\mu^p_y)}{1-p}\ge \frac{L+2}{d_x} \ge \frac{2k(G)-n+2}{d_x},
    \end{align*}
    completing the proof.
\end{proof}

\section{Sharp examples for Theorem \ref{LLY lower bound}}
In this section, we construct a sequence of graphs, showing that the lower bound on Lin-Lu-Yau curvature given by Theorem \ref{LLY lower bound} is sharp.

\begin{proof}[Proof of Theorem \ref{example}]
    If $k=n-1$, we can let $G$ be the complete graph. Suppose that $k\le n-2$. Let $G=(V,E)$ be the graph defined as follows. The vertex set $V$ is:
    $$V:=\{ x \} \cup \{ y \} \cup N_x \cup N_y \cup A\cup B,$$
    where 
    $$|N_x|=|N_y|=|B|=\frac{n-k-1}{2}\ {\rm and}\ |A|=\frac{3k-n-1}{2}.$$
    The above choices are possible since $n-k$ is odd and $\frac{n+1}{3}\leq k\leq n-1$.
    The edge set $E$ is given by
    $$E:=E_0-E_x\cup E_y\cup E_1,$$
    where
    $$E_0:=\{ uv|u\in V,v\in V \},\ E_x:=\{ xv|v\in B\cup N_y \},\ E_y:=\{ yv|v\in B \cup N_x\},$$
    and
    $$E_1:=\{ uv|u\in N_x,v\in N_y \}.$$
    
    We first show that the connectivity of $G$ is $k$. Otherwise, there is a set $X$ of at most $k-1$ vertices such that $G-X$ is disconnected. Assume that $A\cup B\subset X$. Since $|A|+|B|=k-1$, we have $A\cup B= X$. However, $G-A\cup B$ is connected, which is a contradiction. Now, we suppose that $A\cup B\not\subset X$. Then, there is a vertex $v_0\in (A\cup B)\setminus X$. Note that $v_0$ is adjacent to every vertex in $A\cup B\cup N_x\cup N_y$ (except $v_0$ itself). Therefore, all the vertices in $(A\cup B\cup N_x\cup N_y)\setminus X$ belong to one connected component (denoted by $G_1$) in $G-X$. Since $G-X$ is disconnected, without loss of generality, we assume that $x\notin G_1$. Then, $S_1(x)\setminus \{y \} \subset X$. Since
    $|S_1(x)\setminus \{y \} |=k-1$, we have $S_1(x)\setminus \{y \} = X$. However, $G-S_1(x)\setminus \{y \}$ is connected, which is a contradiction. Thus, the connectivity of $G$ is $k$.

    Now, we calculate the Lin-Lu-Yau curvature of the edge $xy$. Note that, for any $u\in N_x$ and $v\in N_y$, we have $d(x,y)=2$. Since $d_x=d_y=k$, it follows by Theorem \ref{regular} that
    \begin{align}\notag
    \kappa_{LLY}(x,y)&= \frac{1}{k}\left(k+1-\min_{\substack{\phi: N_x\to N_y}}\sum_{v\in N_x}d(v,\phi(v))\right)
    \\ \notag &=\frac{1}{k}\left(k+1-2|N_x|\right)=\frac{2k-n+2}{k},
    \end{align}
    completing the proof.
\end{proof}

\section*{Acknowledgement}
\noindent 
We are very grateful to Huiqiu Lin for inspiring questions and discussions relating connectivity to Lin-Lu-Yau curvature. We thank Jack Koolen warmly for very helpful discussions on connectivity of vertex transitive graphs and amply regular graphs. 
This work is supported by the National Key R \& D Program of China 2023YFA1010200 and the National Natural Science Foundation of China No. 12031017 and No. 12431004. K.C.'s research is supported by the New Lotus Scholars Program PB22000259.

\end{document}